 \theoremstyle{plain}
 \newtheorem{lemma}[theorem]{Lemma}
 \newtheorem{corollary}[theorem]{Corollary}
 \newtheorem{proposition}[theorem]{Proposition}
 \theoremstyle{definition}
 \newtheorem{definition}[theorem]{Definition}
 \theoremstyle{remark}
 \newtheorem{remark}[theorem]{Remark}
 \newtheorem{claim}[theorem]{Claim}
\renewcommand{\@tododisplay}[1]{%
\marginpar{#1}%
}
\renewcommand\@displaytodo[2][\todomark]{%
\unskip%
\@tododisplay{{\todoformat #1~(\ref{todolbl:\thetodo})}}%
\footnote[\thetodo]{\todoformat #1:~#2}%
\global\@todotoks\expandafter{\the\@todotoks\todoitem{#1}{#2}}%
\@todotrue%
}%
\renewcommand\todomark{todo}
\newcommand{\R}{{\mathbb R}}
\newcommand{\N}{{\mathbb N}}
\newcommand{\eps}{\varepsilon}
\DeclareMathOperator{\PLReebDom}{\mathbf{PLReebDom}}
\DeclareMathOperator{\PLReebGrph}{\mathbf{PLReebGrph}}
\DeclareMathOperator{\deltaPL}{\delta_\mathit{PL}}
\DeclareMathOperator{\deltaEPL}{\delta_\mathit{ePL}}
\DeclareMathOperator{\deltaEGraph}{\delta_\mathit{eGraph}}
\DeclareMathOperator{\dFD}{\mathit{d}_\mathit{FD}}
\DeclareMathOperator{\im}{im}
\DeclareMathOperator{\openstar}{st}
\DeclareMathOperator{\carrier}{carr}
\DeclareMathOperator{\id}{id}
\title{The Reeb Graph Edit Distance is Universal\footnote{
Research supported in part by the DFG Collaborative Research Center TRR 109 \emph{Discretization in Geometry and Dynamics}. 
We thank 
Barbara Di Fabio and Yusu Wang for valuable discussions.
}
}
\author{Ulrich Bauer}
\affil{Department of Mathematics,
Technical University of Munich, Germany.
}
\author{Claudia Landi}
\affil{Dipartimento di Scienze e Metodi dell'Ingegneria, Università degli Studi di Modena e Reggio Emilia, Reggio Emilia, Italy.
}
\author{Facundo Memoli}
\affil{Department of Mathematics, The Ohio State University, Columbus, Ohio,  U.S.A. 
}
\authorrunning{U. Bauer, C. Landi, and F. M\'emoli} %
\begin{document}

\maketitle

\begin{abstract}
We consider the setting of Reeb graphs of piecewise linear functions  and study distances between them that are stable, meaning that functions which are similar in the supremum norm ought to have similar Reeb graphs. We define an edit distance for Reeb graphs and prove that it is stable and universal, meaning that it provides an upper bound to any other stable distance. In contrast, via a specific construction, we show that the interleaving distance and the functional distortion distance on Reeb graphs are not universal.
\end{abstract}

\section{Introduction}
The concept of Reeb graphs of a Morse function first appeared in \cite{Reeb46}  and was subsequently  applied to the problems in shape analysis in \cite{Sh91,HiSh*01}. The literature on Reeb graphs in the computational geometry and computational topology is ever growing (see, e.g., \cite{Bauer2014Measuring,BaMuWa15} for a discussion and references). 
The Reeb graph plays a central role in topological data analysis, not least because of the success of Mapper \cite{Mapper}, a method providing a discretization of the Reeb graph for a function defined on a point cloud.

A recent line of work has concentrated on questions about identifying suitable notions of distance between Reeb graphs: These include the so called \emph{functional distortion distance} \cite{Bauer2014Measuring}, the \emph{interleaving distance} \cite{deSilva2016Categorified}, and various \emph{graph edit distances} \cite{DiFabio2012Curves,DiFabio2016Edit,Bauer2016An}. There is of course interest in understanding the connection between different existing distances.  In this regard, it has been shown in \cite{BaMuWa15} that the functional distortion and the interleaving distances are bi-Lipschitz equivalent. 
The edit distances defined in \cite{DiFabio2012Curves,DiFabio2016Edit} for Reeb graphs of curves and surfaces, respectively, are shown to be universal in their respective setting, so the functional distortion and interleaving distances restricted to the same settings are a lower bound for those distances. Moreover, an example in \cite{DiFabio2016Edit} shows that the functional distortion distance can be strictly smaller than the edit distance considered in that paper.   

In this paper we concentrate on the setting of PL functions on compact triangulable spaces and in this realm we study the properties of \emph{stability} and \emph{universality} of distances between Reeb graphs. Inspired by a construction of distance between filtered spaces \cite{tripods}, we first construct a novel distance $\deltaPL$ based on considering joint pullbacks of two given Reeb graphs and prove it satisfies both stability and universality. Via analyzing a specific construction we then prove that neither the functional distortion nor the interleaving distances are universal. Finally, we define two edit-like additional distances between Reeb graphs that reinterpret those appearing in \cite{DiFabio2012Curves,DiFabio2016Edit,Bauer2016An} and prove that both are stable and universal. As a consequence, both distances agree with $\deltaPL$. 

\section{Topological and categorical aspects of Reeb graphs}

We start by exploring some topological ideas behind the definition of Reeb graphs. All maps and functions considered in this paper will be assumed to be continuous. Otherwise, we call them set maps and set functions.

\subsection{Reeb graphs as quotient spaces}

The classical construction of a Reeb graph \cite{Reeb46} is given via an equivalence relation as follows:

\begin{definition}\label{def-classical}
For $f:X\to \R$ a Morse function on a compact smooth manifold, the \emph{Reeb graph of $f$} is the quotient space $X/{\sim}_{f}$, with $x\sim_{f} y$ if and only if $x$ and $y$ belong to the same connected component of some level set $f^{-1}(t)$ (implying $t=f(x)=f(y)$). 
\end{definition}

While this definition was originally considered in the setting of Morse theory, it does not make explicit use of the smooth structure, and so it can be applied to a quite broad setting. However, some additional assumptions of $X$ and $f$ are justified in order to maintain some of the characteristic properties of Reeb graphs in a generalized setting.
With this motivation in mind, we revisit the definition in terms of quotient maps and functions with discrete fibers.

A \emph{quotient map} $p:X\to Y$ is a surjection such that a set $U$ is open in $Y$ if and only if $p^{-1}(U)$ is open in $X$. 
In particular, a surjection between compact Hausdorff spaces is a quotient map by the closed map lemma.
A quotient map $p:X\to Y$ is characterized by the universal property that a set map $\Phi: Y\to Z$ into any topological space~$Z$ is continuous if and only if $\Phi\circ p$ is continuous.%

The motivation for considering quotient maps and functions with discrete fibers is explained by the following fact.

\begin{proposition}\label{Reeb_quotient}
Let $f:X\to\R$ be a  function with locally connected fibers, and let $q:X\to X/{\sim}_{ f}$ be the canonical quotient map. Then the induced function $\tilde f: X/{\sim}_{ f}\to \R$ with $f=\tilde f\circ q$ has discrete fibers.
\end{proposition}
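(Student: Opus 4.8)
The plan is to identify the fiber $\tilde f^{-1}(t)$ with the set of connected components of the level set $f^{-1}(t)$, and then to show that the topology it inherits as a subspace of $X/{\sim}_f$ is the discrete one. First, since $x\sim_f y$ forces $f(x)=f(y)$, the relation refines the fibers of $f$, so $\tilde f$ is a well-defined set function; as $\tilde f\circ q=f$ is continuous and $q$ is a quotient map, the universal property of quotient maps makes $\tilde f$ continuous. Fixing $t$, the identity $\tilde f\circ q=f$ gives $q^{-1}(\tilde f^{-1}(t))=f^{-1}(t)$, so $f^{-1}(t)$ is a saturated set and $\tilde f^{-1}(t)=q(f^{-1}(t))$. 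Moreover, for $x,y\in f^{-1}(t)$ the only level set that can witness $x\sim_f y$ is $f^{-1}(t)$ itself, so the $\sim_f$-classes meeting $f^{-1}(t)$ are precisely its connected components; hence the points of the fiber $\tilde f^{-1}(t)$ are exactly these components.

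The key step is to pin down the topology on this fiber. Because $f$ is continuous, $f^{-1}(t)$ is closed in $X$, and we have just observed that it is saturated. Invoking the standard fact that the restriction of a quotient map to a saturated closed (or open) subset is again a quotient map, the restriction $q|_{f^{-1}(t)}\colon f^{-1}(t)\to\tilde f^{-1}(t)$ is a quotient map onto the fiber with its subspace topology. Equivalently, the subspace topology on $\tilde f^{-1}(t)$ coincides with the quotient topology of $f^{-1}(t)$ by its connected components, i.e.\ with $\pi_0\big(f^{-1}(t)\big)$ as a quotient space. Thus a subset $S\subseteq\tilde f^{-1}(t)$ is open exactly when the union of the corresponding components is open in $f^{-1}(t)$.

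It remains to bring in the hypothesis. Since $f^{-1}(t)$ is locally connected, each of its connected components is open in $f^{-1}(t)$ (components are always closed, so here they are clopen). Consequently, for every component $C$ the singleton $\{q(C)\}$ has open preimage $C$ under $q|_{f^{-1}(t)}$ and is therefore open in $\tilde f^{-1}(t)$; every point of the fiber is isolated, so the fiber is discrete, as claimed.

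The main obstacle, and the point at which some care is needed, is that openness in $\tilde f^{-1}(t)$ \emph{a priori} refers to the topology induced from the whole quotient $X/{\sim}_f$, not from $f^{-1}(t)$. A naive attempt would separate a component $C$ from the remaining components by an open set $U\subseteq X$ with $U\cap f^{-1}(t)=C$ (available from local connectedness) and then pass to its saturation; but the saturation of an open set need not be open, so this does not directly produce an open neighborhood of $q(C)$ in the quotient. The restriction theorem for saturated closed subsets is exactly what circumvents this difficulty: it reduces the question of openness to the level set $f^{-1}(t)$ alone, where local connectedness makes each component clopen and the argument closes cleanly.
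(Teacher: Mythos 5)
Your proof is correct, and it confronts the one genuine subtlety here --- that the fiber's topology is a priori induced from the whole quotient $X/{\sim}_f$, not from $f^{-1}(t)$ --- by a different device than the paper. You invoke the standard restriction theorem (a quotient map restricted to a saturated closed subset is a quotient map onto its image), which identifies $\tilde f^{-1}(t)$, with its subspace topology, with the quotient space $\pi_0\bigl(f^{-1}(t)\bigr)$; local connectedness then makes each singleton open. The paper instead avoids any restriction theorem by working with closed sets: for an arbitrary subset $S$ of the fiber, its complement $T$ within the fiber has preimage $q^{-1}(T)$ equal to a union of connected components of $f^{-1}(t)$, which is open in $f^{-1}(t)$ by local connectedness; hence $q^{-1}(S)$ is closed in $f^{-1}(t)$, hence closed in $X$ (because $f^{-1}(t)$ is closed in $X$), and the defining property of the quotient map $q$ --- applied globally on $X$, with no restriction needed --- makes $S$ closed in $X/{\sim}_f$. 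Every subset of the fiber being closed, the fiber is discrete. The trade-off: your route buys a cleaner conceptual conclusion (the fiber \emph{is} $\pi_0$ of the level set as a quotient space) at the cost of citing an extra standard theorem, whereas the paper's closed-set argument is self-contained, using only the definition of a quotient map and the fact that a closed subset of a closed subspace is closed in the ambient space. This asymmetry between open and closed sets is exactly why the ``naive attempt'' you describe at the end fails when phrased in terms of open neighborhoods but goes through painlessly for closed sets.
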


\begin{proof}
To see that the fibers of $\tilde f$ are discrete, we show that any subset $S$ of $\tilde f^{-1}(t)$ is closed. Let $T=\tilde f^{-1}(t) \setminus S$. Then $q^{-1}(T)$ is a disjoint union of connected components of $ f^{-1}(t)$. Since $\tilde f^{-1}(t)$ is locally connected, each of its connected components is open in the fiber, and so $q^{-1}(T)$ is open in $ f^{-1}(t)$, implying that $q^{-1}(S)$ is closed in $f^{-1}(t)$ and hence in $X$. Since $q$ is a quotient map, $q^{-1}(S)$ is closed if and only if $S$ is closed, yielding the claim.
\end{proof}

\subsection{Reeb quotient maps and Reeb graphs of piecewise linear functions}

We now define a class of quotient maps that leave Reeb graphs invariant up to isomorphism. The main goal is to provide a natural construction for lifting functions $f:X\to\R$ to spaces $Y$ through a quotient map $Y \to X$ in a way that yields isomorphic Reeb graphs.
To this end, we will define two categories, the category of Reeb domains and the category of Reeb graphs.

\begin{definition}\label{ReebSpCat}
We define the {\em category $\PLReebDom$ of (compact triangulable) Reeb domains} as follows:
\begin{itemize}
\item The objects of $\PLReebDom$ \emph{(Reeb domains)} are connected compact triangulable spaces. 
\item The morphisms of $\PLReebDom$ \emph{(Reeb quotient maps)} are surjective piecewise linear maps with connected fibers.
\end{itemize}
\end{definition}
The fact that this is indeed a category will be established in \cref{ReebCategory}. 

\begin{definition}\label{ReebGrCat}
The category of {\em Reeb graphs}, denoted by $\PLReebGrph$, is the category whose objects are Reeb domains $R_f$ endowed with PL functions $\tilde f:R_f\to \R$ with discrete fibers called {\em Reeb functions}, and whose morphisms between Reeb domains $R_f$ and $R_g$ respectively endowed with Reeb functions $\tilde f$ and $\tilde g$ are PL maps $\Phi:R_f\to R_g$  such that $\tilde g\circ\Phi = \tilde f$.
\end{definition}
In particular, the isomorphisms between Reeb graphs are PL homeomorphisms that preserve the function values of the associated Reeb functions.
A Reeb graph is actually a \emph{finite topological graph} (a compact triangulable space of dimension at most $1$).

\begin{theorem}\label{poly}
Any Reeb graph $R_f$ in $\PLReebGrph$ is a finite topological graph.
\end{theorem}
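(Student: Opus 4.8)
The plan is to reduce the statement to a purely local claim about simplices, exploiting that the only structural feature distinguishing a Reeb graph from an arbitrary compact triangulable space is that its Reeb function $\tilde f$ is PL with \emph{discrete} fibers. First I would fix a triangulation $K$ of $R_f$ with respect to which $\tilde f$ is simplexwise affine; such a triangulation exists by the very definition of a PL function. Since $R_f$ is already a connected compact triangulable space by the definition of a Reeb domain, it remains only to show that $K$ has dimension at most $1$, i.e.\ that $K$ contains no simplex of dimension $\geq 2$.

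I would argue this by contradiction. Suppose $\sigma$ is a simplex of $K$ with $\dim\sigma \geq 2$, and consider the restriction $\tilde f|_\sigma$, which is affine. There are two cases. If $\tilde f|_\sigma$ is constant with value $c$, then $\sigma \subseteq \tilde f^{-1}(c)$, so the fiber over $c$ contains a $2$-dimensional disk and is certainly not discrete. If $\tilde f|_\sigma$ is non-constant, then $\tilde f(\sigma)$ is a nondegenerate closed interval; choosing any $t$ in its interior, the level set $\{x\in\sigma : \tilde f(x)=t\}$ is the intersection of $\sigma$ with an affine hyperplane, hence a convex set of dimension $\dim\sigma-1\geq 1$, so it contains a line segment and again fails to be discrete. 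In either case $\tilde f^{-1}(t)$ has a point that is not isolated in it, contradicting the hypothesis that the fibers of $\tilde f$ are discrete.

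Having ruled out simplices of dimension $\geq 2$, every simplex of $K$ is a vertex or an edge, so $R_f$ is a compact triangulable space of dimension at most $1$, which is precisely a finite topological graph. The argument is entirely elementary; the only point requiring a little care is the choice of the compatible triangulation and the observation that discreteness of a fiber is already violated by the presence of any positive-dimensional affine slice inside a single simplex. I do not expect a genuine obstacle, since the discrete-fiber condition is exactly strong enough to forbid the $1$- and $2$-dimensional level pieces that a higher-dimensional simplex would force.
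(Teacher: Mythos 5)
Your proof is correct and follows essentially the same approach as the paper: fix a triangulation on which $\tilde f$ is simplexwise linear, and observe that any simplex $\sigma$ of dimension at least $2$ would force a level set of dimension at least $1$ inside $\sigma$, contradicting discreteness of the fibers. Your explicit case split between constant and non-constant $\tilde f|_\sigma$ simply spells out a detail the paper's one-line argument leaves implicit.
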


\begin{proof}
By definition, $\tilde f$ is (simplexwise) linear for some triangulation of $R_f$. If there were a simplex $\sigma$ of dimension at least $2$ in the triangulation of $R_f$, then  for any $x$ in the interior of~$\sigma$, the intersection $\sigma \cap \tilde f^{-1}(\tilde f(x))$ would have to be of at least dimension $1$. But this would contradict the assumption that $\tilde f$ has discrete fibers.
\end{proof}

\begin{definition}
Generalizing the classical definition (Definition \ref{def-classical}) %
, we say that a Reeb graph $R_f$ 
is \emph{the Reeb graph of} $f:X\to\R$ if 
there is a Reeb quotient map $p: X \to R_f$ such that $f=\tilde f \circ p$, where $\tilde f: R_f \to \R$ is the Reeb function of $R_f$.
\end{definition}

The following lemma shows how a transformation $g = \xi \circ f $ of a function $f$ lifts to a Reeb quotient map $\zeta$ between the corresponding Reeb graphs.

\begin{lemma}\label{psi}
Assume that $\tilde f : R_f \to \R,\tilde g : R_g \to \R$ are Reeb functions, $p_f : X \to R_f,p_g : X \to R_g$ are Reeb quotient maps, $f = \tilde f \circ p_f$, $g = \tilde g \circ p_g$, and $\xi: \im f \to \im g$ is a piecewise linear function such that $g = \xi \circ f$.
Then $\zeta = p_g \circ p_f^{-1}$ is a Reeb quotient map from $R_f$ to $R_g$.
\end{lemma}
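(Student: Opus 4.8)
The plan is to first make sense of $\zeta = p_g \circ p_f^{-1}$ as an honest function, and then to verify in turn that it is continuous, surjective, has connected fibers, and is piecewise linear, the last being the only substantial point.

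First I would check well-definedness, i.e.\ that $p_g$ is constant on each fiber of $p_f$. Fix $r \in R_f$ and put $t = \tilde f(r)$. Every $x \in p_f^{-1}(r)$ has $f(x) = \tilde f(p_f(x)) = t$, so $g(x) = \xi(f(x)) = \xi(t)$; hence $p_f^{-1}(r)$ is contained in the single level set $g^{-1}(\xi(t))$. Because $p_f$ is a Reeb quotient map its fiber $p_f^{-1}(r)$ is connected, so $p_g(p_f^{-1}(r))$ is connected. But $\tilde g(p_g(x)) = g(x) = \xi(t)$ shows that this image lies in $\tilde g^{-1}(\xi(t))$, which is discrete since $\tilde g$ is a Reeb function. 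A connected subset of a discrete space is a point, so $p_g(p_f^{-1}(r))$ is a single point, which I define to be $\zeta(r)$. By construction $\zeta \circ p_f = p_g$.

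From the identity $\zeta \circ p_f = p_g$ the elementary properties follow formally. Since $p_f$, being a surjection between compact Hausdorff spaces, is a quotient map, its universal property gives that $\zeta$ is continuous because $\zeta \circ p_f = p_g$ is. Surjectivity is immediate from $\im \zeta \supseteq \im(\zeta\circ p_f) = \im p_g = R_g$. For connected fibers I would use $p_f^{-1}(\zeta^{-1}(s)) = (\zeta\circ p_f)^{-1}(s) = p_g^{-1}(s)$ together with surjectivity of $p_f$ to obtain $\zeta^{-1}(s) = p_f(p_g^{-1}(s))$; as $p_g^{-1}(s)$ is connected and $p_f$ is continuous, $\zeta^{-1}(s)$ is connected.

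The remaining and most delicate point is that $\zeta$ is piecewise linear, which I expect to be the main obstacle. The tool is the identity $\tilde g \circ \zeta = \xi \circ \tilde f$ (apply $\tilde g$ to $\zeta \circ p_f = p_g$ and use $g = \xi\circ f$). I would choose compatible triangulations of $R_f$ and $R_g$ so that on every edge $e$ of $R_f$ the composite $\xi\circ\tilde f|_e$ is either constant or a linear homeomorphism onto a segment whose interior contains no vertex value of $R_g$; this is possible because $\xi$, $\tilde f$, $\tilde g$ are PL and the relevant fibers, being discrete in a compact graph, are finite. On an edge where $\xi\circ\tilde f$ is constant, $\tilde g\circ\zeta$ is constant, so $\zeta(e)$ is a connected subset of a discrete fiber of $\tilde g$, hence a point. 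On an edge where $\xi\circ\tilde f|_e$ is a homeomorphism, $\tilde g$ is injective on the arc $\zeta(e)$ and $\tilde g(\zeta(e)) = \xi(\tilde f(e))$ has no interior vertex value, so $\zeta(e)$ contains no vertex of $R_g$ in its interior and therefore lies in a single closed edge $e'$, on which $\tilde g|_{e'}$ is a linear homeomorphism. Then $\zeta|_e = (\tilde g|_{e'})^{-1}\circ(\xi\circ\tilde f|_e)$ is a composition of linear maps, hence linear. Thus $\zeta$ is simplicial for these triangulations (after subdividing $R_g$ at the images of the vertices of $R_f$), so it is PL, completing the proof that $\zeta$ is a Reeb quotient map.
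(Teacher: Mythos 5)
Your proof is correct, and its skeleton---make $\zeta$ a well-defined set map, then verify continuity, surjectivity, connected fibers, and piecewise linearity---matches the paper's. Two points differ in substance. For well-definedness, the paper argues that the fiber $p_f^{-1}(r)$ is a connected component of the level set $f^{-1}(t)$ and is contained in $g^{-1}(\xi(t))$, hence is crushed by $p_g$ to a single point; you instead push the connected fiber forward and observe that its image lies in the discrete fiber $\tilde g^{-1}(\xi(t))$, where connectedness forces a singleton. These are essentially equivalent, and yours is arguably the cleaner bookkeeping, since it avoids needing that fibers of a Reeb quotient map are exactly components of level sets. The more significant divergence is piecewise linearity: the paper disposes of it in one sentence (``since $p_g$ and $p_f$ are PL, the map $\zeta$ is PL as well''), implicitly invoking the general PL-topology fact that a continuous map $\zeta$ satisfying $\zeta\circ p_f=p_g$, with $p_f$ a PL surjection of compact polyhedra and $p_g$ PL, is itself PL (for instance because its graph is the image of $X$ under the PL map $(p_f,p_g)$, hence a polyhedron). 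You instead prove PL-ness by hand, exploiting that Reeb graphs are finite topological graphs (\cref{poly}) and that Reeb functions have discrete fibers: after suitable subdivision, each edge $e$ of $R_f$ is mapped either to a point or into a single closed edge $e'$ of $R_g$ via the linear map $(\tilde g|_{e'})^{-1}\circ\xi\circ\tilde f|_e$. This is more elementary and self-contained, at the price of being specific to dimension one; the paper's route is shorter and dimension-independent. Two small details you gloss over are harmless but worth stating: $\tilde g$ is non-constant, hence a linear homeomorphism, on each edge $e'$ of its triangulation (this is precisely where discreteness of the fibers of $\tilde g$ enters), and $\zeta(e)$ is indeed an arc because $\zeta|_e$ is injective, which follows since $\tilde g\circ\zeta|_e=\xi\circ\tilde f|_e$ is.
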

In particular, if $\xi$ is a PL homeomorphism, then so is $\zeta$.

\begin{proof}
Let $x \in R_f$, and let $t=\tilde f(x)$. Then $C=p_f^{-1}(x)$ is a connected component of $f^{-1}(t)$ by the assumption that $p_f$ is a Reeb quotient map. 
By commutativity, we have $f^{-1} \subseteq f^{-1} \circ \xi^{-1} \circ \xi = g^{-1} \circ \xi$, and since $C$ is connected, there must be a single $y \in R_g$ with $p_g(C)=\{y\}$. Hence, $\zeta = p_g \circ p_f^{-1}$ is a set map. 
Moreover, since $p_g$ is continuous and $p_f$ is closed, the map $\zeta$ is continuous; since $p_g$ and $p_f$ are PL, the map~$\zeta$ is PL as well.

Now let $y\in R_g$ and let $s=\tilde g(y)$. Similarly to above, $C=p_g^{-1}(y)$ is a connected component of $g^{-1}(s)$. By commutativity, there is $x \in p_f(C) \subseteq R_f$ with $t = \tilde f(x) \in \xi^{-1}(s)$. Hence $\zeta$ is surjective and the fiber $\zeta^{-1}(y) = p_f(C)$ is connected.
\end{proof}

\begin{remark}
By \cref{Reeb_quotient,psi}, the Reeb graph $R_f$ of $f: X \to \R$ is isomorphic to $X/{\sim}_{f}$.
As a consequence, the Reeb graph $R_f$ together with the Reeb quotient map $p$ is unique up to a unique isomorphism, turning the Reeb graph into a universal property.
\end{remark}

We now proceed to prove that Reeb quotient maps are closed under composition. We start by showing that not only the fibers, but more generally all preimages of closed connected sets are connected.

\begin{proposition}\label{fibers}
If $p:X\to Y$ is a Reeb quotient map, then the preimage $p^{-1}(K)$ of a closed connected set $K \subseteq Y$ is connected.
\end{proposition}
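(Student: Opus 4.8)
The plan is to argue by contradiction, using the defining property that $p$ has connected fibers together with the fact that $p$ is a closed map. Since the objects of $\PLReebDom$ are compact triangulable spaces, both $X$ and $Y$ are compact Hausdorff, so the continuous surjection $p$ is closed by the closed map lemma; in particular it carries closed (equivalently, compact) subsets of $X$ to closed subsets of $Y$. I would first record that $p^{-1}(K)$ is closed in $X$, since $K$ is closed and $p$ is continuous. Suppose, for contradiction, that $p^{-1}(K)$ is disconnected, and write $p^{-1}(K) = A \sqcup B$ as a disjoint union of two nonempty sets that are closed in $p^{-1}(K)$, hence closed, and thus compact, in $X$.

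The key step is to transport this separation downstairs along $p$. For each $y \in K$ the fiber $p^{-1}(y)$ is connected by hypothesis and contained in $p^{-1}(K) = A \sqcup B$, so it lies entirely in $A$ or entirely in $B$. Consequently $A = p^{-1}(p(A))$ and $B = p^{-1}(p(B))$, and moreover $p(A)$ and $p(B)$ are disjoint: a common image point would have a fiber meeting both $A$ and $B$, contradicting its connectedness. Thus $p(A)$ and $p(B)$ yield a partition $K = p(A) \sqcup p(B)$ into two nonempty sets.

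Finally, since $A$ and $B$ are compact and $p$ is closed, $p(A)$ and $p(B)$ are closed in $Y$, hence closed in $K$. This exhibits $K$ as a disjoint union of two nonempty closed subsets, contradicting the connectedness of $K$, so $p^{-1}(K)$ must be connected. I expect the only delicate point to be the disjointness $p(A) \cap p(B) = \emptyset$, which is precisely where the connected-fiber hypothesis enters; the remainder is a routine application of the closed map lemma and the definition of a separation.
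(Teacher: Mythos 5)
Your proof is correct and takes essentially the same route as the paper's: both push the pieces of a closed decomposition of $p^{-1}(K)$ down to $Y$ via the closed map lemma and then play the connectedness of $K$ against the connectedness of the fibers. The only difference is packaging—the paper argues directly that any two closed sets covering $p^{-1}(K)$ must intersect (by finding a fiber meeting both), whereas you assume a disjoint separation and use fiber-connectedness to make the images disjoint, reaching the same contradiction.
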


\begin{proof}
Assume that $K$ is nonempty; otherwise, the claim holds trivially. Let $p^{-1}(K)=U\cup V$, with $U,V$ nonempty and closed in $p^{-1}(K)$. To show that $p^{-1}(K)$ is connected, it suffices to show that $U\cap V$ is necessarily nonempty.

Because $p^{-1}(K)$ is closed in $X$, the sets $U$ and $V$ are also closed in $X$. 
The images $p(U)$ and $p(V)$ are closed by the closed map lemma, and their union is $K$. By connectedness of $K$, their intersection is nonempty. Let $y\in p(U)\cap p(V)$.
We have
\[p^{-1}(y)=(p^{-1}(y)\cap U)\cup (p^{-1}(y)\cap V).\]
The subspaces $(p^{-1}(y)\cap U)$ and $(p^{-1}(y)\cap V)$ are closed in $p^{-1}(y)$,
and by connectedness of the fiber $p^{-1}(y)$, their intersection must be nonempty. In particular, $U\cap V$ is nonempty.
\end{proof}

\begin{corollary}\label{composition}
If $p:X\to Y$ and $q:Y\to Z$ are Reeb quotient maps, then the composition $q\circ p:X\to Z$ is a Reeb quotient map too.
\end{corollary}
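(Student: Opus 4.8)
The plan is to verify directly that $q \circ p$ satisfies the three defining conditions of a Reeb quotient map from \cref{ReebSpCat}: it must be surjective, piecewise linear, and have connected fibers. The first two are immediate, since a composition of surjections is surjective and a composition of PL maps is again PL. (In fact, because Reeb domains are compact triangulable and hence compact Hausdorff, the closed map lemma guarantees that any such surjection is automatically a topological quotient map, so no separate quotient condition needs checking.) The entire content of the corollary therefore lies in showing that the fibers of $q \circ p$ are connected, and here I expect \cref{fibers} to do essentially all the work.

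The key observation is that for any $z \in Z$ we can rewrite the fiber as $(q \circ p)^{-1}(z) = p^{-1}(q^{-1}(z))$, which reduces the problem to understanding the $p$-preimage of the set $q^{-1}(z) \subseteq Y$. Now $q^{-1}(z)$ is connected because $q$ is a Reeb quotient map. Since \cref{fibers} applies to preimages of closed connected sets, I first note that $q^{-1}(z)$ is also closed: as $Y$ is a compact triangulable space it is Hausdorff, so the singleton $\{z\}$ is closed in $Z$ and its preimage under the continuous map $q$ is closed in $Y$. Thus $q^{-1}(z)$ is a closed connected subset of $Y$.

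Applying \cref{fibers} to the Reeb quotient map $p$ and the closed connected set $K = q^{-1}(z)$ then yields that $p^{-1}(q^{-1}(z)) = (q\circ p)^{-1}(z)$ is connected. Since $z$ was arbitrary, all fibers of $q \circ p$ are connected, completing the verification.

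I do not anticipate any serious obstacle: the substantive difficulty was already isolated and resolved in \cref{fibers}, whose statement was deliberately phrased for closed connected sets rather than merely for singleton fibers precisely so that it could be reused in this composition argument. The only point requiring a moment's care is recognizing that one must pass through the closedness of $q^{-1}(z)$ before invoking \cref{fibers}, which in turn relies on the standing assumption that Reeb domains are compact and hence Hausdorff.
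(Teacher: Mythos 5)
Your proof is correct and is precisely the argument the paper intends: the corollary is stated without proof because it follows immediately from \cref{fibers}, which was deliberately phrased for closed connected sets so that it could be applied to $K = q^{-1}(z)$ exactly as you do. Your handling of surjectivity, piecewise linearity, and the closedness of $q^{-1}(z)$ fills in the routine details the paper leaves implicit.
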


As mentioned before, the main purpose of Reeb quotient maps is to lift Reeb functions to larger domains while maintaining the same Reeb graph. The following property is a consequence of the above statement:

\begin{corollary}\label{homeoR}
Let $f: X \to \R$ be a function with Reeb graph $R_f$, %
and let $q: Y \to X$ be a Reeb quotient map. Then $R_f$ %
is also the Reeb graph of 
$f\circ q: Y \to \R$.

\end{corollary}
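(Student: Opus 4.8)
The plan is to simply compose the two given Reeb quotient maps and to verify that the composite is compatible with the Reeb function $\tilde f$. Unwinding the definition of ``$R_f$ is the Reeb graph of $f$'', there is a Reeb quotient map $p: X \to R_f$ with $f = \tilde f \circ p$, where $\tilde f: R_f \to \R$ is the Reeb function of $R_f$. The natural candidate exhibiting $R_f$ as the Reeb graph of $f \circ q$ is then the composite $p \circ q: Y \to R_f$.

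First I would invoke \cref{composition}: since $q: Y \to X$ and $p: X \to R_f$ are both Reeb quotient maps, their composite $p \circ q: Y \to R_f$ is again a Reeb quotient map. This is the only substantive input, and it has already been established. Then I would check compatibility with the Reeb function by the computation $\tilde f \circ (p \circ q) = (\tilde f \circ p) \circ q = f \circ q$, using associativity of composition together with the defining identity $f = \tilde f \circ p$. Having produced a Reeb quotient map $p \circ q$ from $Y$ onto $R_f$ satisfying $f \circ q = \tilde f \circ (p \circ q)$, the definition of ``Reeb graph of'' is met verbatim, so $R_f$ is indeed the Reeb graph of $f \circ q$.

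There is no real obstacle here: the entire content of the corollary is packaged in the closure of Reeb quotient maps under composition (\cref{composition}). Once that closure property is available, the corollary reduces to a single line of diagram chasing, which is exactly why it is stated as a corollary of \cref{composition} rather than proved from scratch.
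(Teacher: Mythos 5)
Your proposal is correct and is essentially identical to the paper's own proof: both exhibit $p \circ q$ as the witnessing Reeb quotient map, invoke \cref{composition} for its closure under composition, and verify $f \circ q = \tilde f \circ (p \circ q)$ by associativity. Nothing to add.
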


\begin{proof}
Let $\tilde f: R_f \to \R$ be the Reeb function of $R_f$ and $p: X \to R_f$ be the Reeb quotient map factoring $f = \tilde f \circ p$. Then by \cref{composition}, $R_f$ is also a Reeb graph for $f \circ q = \tilde f \circ (p \circ q): Y \to \R$ via the Reeb quotient map $p \circ q: Y \to R_f$.
\end{proof}

We now show that Reeb quotient maps are stable under pullbacks.

\begin{proposition}\label{pullback}
Consider the pullback diagram
\[
\begin{tikzcd}[column sep={12.5mm,between origins},row sep=5.5mm]
 & Y & \\
 X_1\ar[ur,"p_1"]& & X_2\ar[ul,"p_2",swap] \\
& X_1\times_{Y}X_2\ar[ul,"q_1"]\ar[ur,"q_2",swap]
\end{tikzcd}
\]
If 
the map $p_1$ (resp. $p_2$) is a Reeb quotient map, then so is the map $q_2$ (resp. $q_1$).
\end{proposition}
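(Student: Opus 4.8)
The plan is to use the symmetry of the diagram and prove only one of the two implications, say that $q_2$ is a Reeb quotient map whenever $p_1$ is; the other follows by exchanging the roles of the indices. Recall that a Reeb quotient map is a surjective PL map with connected fibers between connected compact triangulable spaces, so I must verify that $E := X_1\times_Y X_2$ is a Reeb domain and that $q_2 : E \to X_2$ is surjective, PL, and has connected fibers.

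I would start from the concrete description $E = \{(x_1,x_2)\in X_1\times X_2 : p_1(x_1)=p_2(x_2)\}$ with $q_2(x_1,x_2)=x_2$. The fiber over a point $x_2$ is then $q_2^{-1}(x_2)=p_1^{-1}(p_2(x_2))\times\{x_2\}$, which is homeomorphic to the fiber $p_1^{-1}(p_2(x_2))$ of $p_1$. Since $p_1$ is a Reeb quotient map, this set is nonempty and connected for every $x_2$, which simultaneously yields the surjectivity of $q_2$ and the connectedness of its fibers.

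For the PL and triangulability requirements I would avoid arguing directly with the subspace topology and instead write $E = (p_1\times p_2)^{-1}(\Delta_Y)$, where $\Delta_Y\subseteq Y\times Y$ denotes the diagonal. As $Y$ is a compact polyhedron, $\Delta_Y$ is a subpolyhedron, and $p_1\times p_2 : X_1\times X_2 \to Y\times Y$ is PL; invoking the standard fact that the preimage of a subpolyhedron under a PL map is again a subpolyhedron, $E$ is a compact triangulable subspace of $X_1\times X_2$, and $q_2$ is PL as the restriction of the projection $X_1\times X_2 \to X_2$. It remains to see that $E$ is connected. Here I would use that $q_2$ is a closed map (being a continuous map from a compact space to a Hausdorff space): if $E = A\sqcup B$ with $A,B$ clopen, then each connected fiber lies entirely in $A$ or in $B$, so the closed sets $q_2(A)$ and $q_2(B)$ are disjoint and cover the connected space $X_2$; hence one of them, and therefore one of $A,B$, is empty.

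The main obstacle, and the only genuinely topological point, is the connectedness of $E$: it does not follow formally from the universal property of the pullback and instead rests on combining the closedness of $q_2$ with the connectedness of both the fibers of $p_1$ and the base $X_2$. By contrast, the triangulability step is routine once $E$ is recognized as the preimage of the diagonal, and I would cite the corresponding PL-topology fact rather than reprove it.
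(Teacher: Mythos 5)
Your proof is correct and takes essentially the same route as the paper's: the identical fiber computation $q_2^{-1}(x_2)=p_1^{-1}(p_2(x_2))\times\{x_2\}$ delivers surjectivity and connectedness of the fibers, and connectedness of the pullback space is obtained from the same closed-map-plus-connected-fibers argument over the connected base $X_2$. The only differences are presentational: the paper applies its \cref{fibers} (preimages of closed connected sets under such maps are connected, with $K=X_2$) where you re-derive that argument inline via a clopen partition, and it cites Stallings for the existence of pullbacks of compact triangulable spaces where you make the triangulability explicit by writing the pullback as the PL preimage of the diagonal.
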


\begin{proof}

First note that the category of compact triangulable spaces has all pullbacks \cite{Stallings93}.
For $x_2\in X_2$, by surjectivity of $p_1$ there is some $x_1\in X_1$ such that $p_1(x_1)=p_2(x_2)$. Thus $(x_1,x_2)\in X_1\times_Y X_2$ and $q_2(x_1,x_2)=x_2$, proving that $q_2$ is surjective. 
Moreover, 
for $x_2\in X_2$, we have $q_2^{-1}(x_2)=p_1^{-1}(p_2(x_2))\times\{x_2\}$. By assumption, $p_1^{-1}(p_2(x_2))$ is connected being a fiber of $p_1$, implying that $p_1^{-1}(p_2(x_2))\times\{x_2\}$ is connected. 
Finally, applying \cref{fibers} to $q_2$, we obtain that the pullback space $X_1\times_{Y}X_2$ is connected. The proof for $q_1$ is analogous.
\end{proof}

\begin{theorem}\label{ReebCategory}
The Reeb domains and Reeb quotient maps form a finitely complete category, i.e., every finite diagram has a limit.
\end{theorem}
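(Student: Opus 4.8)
The plan is to reduce finite completeness to two more basic properties via the classical fact that a category has all finite limits precisely when it possesses a terminal object and all pullbacks. It therefore suffices to check these two conditions in $\PLReebDom$, and the construction underlying the second is essentially the content of \cref{pullback}.

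First I would verify that the one-point space $\ast$ is terminal. It is connected, compact, and triangulable, hence an object of $\PLReebDom$. For any Reeb domain $X$, the constant map $X \to \ast$ is PL and surjective (since $X$ is nonempty), and its single fiber $X$ is connected; thus it is a Reeb quotient map, and it is clearly the unique morphism $X \to \ast$. Hence $\ast$ is terminal.

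Next, for the pullback of an arbitrary cospan $X_1 \xrightarrow{p_1} Y \xleftarrow{p_2} X_2$ in $\PLReebDom$, I would form the pullback $X_1 \times_Y X_2$ in the (finitely complete) category of compact triangulable spaces. By \cref{pullback} both projections $q_1, q_2$ are Reeb quotient maps, and the argument there (via \cref{fibers}) forces the total space to be connected, so $X_1 \times_Y X_2$ is a Reeb domain and $q_1, q_2$ are morphisms of $\PLReebDom$. Assembling a terminal object with all such pullbacks then yields finite completeness by the cited classical characterization.

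The step I expect to be the main obstacle is confirming that the square built from $q_1, q_2$ is a limit in $\PLReebDom$, and not merely in the ambient category of spaces: for every Reeb domain $Z$ equipped with Reeb quotient maps $r_1, r_2$ satisfying $p_1 r_1 = p_2 r_2$, the unique mediating PL map $u : Z \to X_1 \times_Y X_2$ supplied by the ambient universal property must itself be surjective with connected fibers, i.e.\ a Reeb quotient map. Establishing this requires understanding how the fibers of $u$ sit inside the intersecting fibers of $r_1$ and $r_2$ lying over a common point of $Y$, and I would attack it using the connectivity of preimages of closed connected sets (\cref{fibers}) together with closure under composition (\cref{composition}), exploiting that $h = p_1 r_1 = p_2 r_2$ is itself a Reeb quotient map whose connected fibers control the fibers of $u$.
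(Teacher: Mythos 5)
Your overall route—terminal object plus pullbacks, then the classical characterization of finite completeness—is exactly the paper's: its proof consists of noting that Reeb quotient maps form a category (\cref{composition}), that the one-point space is terminal, that pullbacks are supplied by \cref{pullback}, and a citation of the standard categorical fact. The difference is the step you singled out as the main obstacle, which the paper passes over in silence. You are right that it is the crux, but it cannot be closed: the mediating map $u\colon Z \to X_1\times_Y X_2$ is in general \emph{not} a Reeb quotient map, because it need not be surjective. Take $Y=\ast$ the one-point space, $X_1=X_2=Z=[0,1]$, and $r_1=r_2=\id$; every map in sight is a Reeb quotient map, but $u$ is the diagonal $z\mapsto(z,z)$ into $[0,1]^2$. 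Moreover, no other candidate object repairs this: if some $P$ with legs $\pi_1,\pi_2$ were a pullback of $[0,1]\to\ast\leftarrow[0,1]$ in $\PLReebDom$, the cone $([0,1],\id,\id)$ would produce a mediating \emph{morphism} $d$, which is surjective by definition and satisfies $\pi_1 d=\pi_2 d=\id$, forcing $\pi_1=\pi_2$; but then the cone $([0,1]^2,\mathrm{pr}_1,\mathrm{pr}_2)$ of coordinate projections admits no mediating map at all, since $\pi_1 u=\pi_2 u$ for every $u$ while $\mathrm{pr}_1\neq\mathrm{pr}_2$. So $\PLReebDom$ does not even have binary products (pullbacks over the terminal object) in the strict categorical sense, and no amount of work with \cref{fibers} and \cref{composition} can establish the universal property: connectedness of the fibers of $h=p_1\circ r_1$ gives no control over whether $r_1^{-1}(x_1)\cap r_2^{-1}(x_2)$ is nonempty.

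What survives—and what the paper actually establishes and uses—is precisely the content of your second paragraph: the limit of a Reeb zigzag diagram, computed in the ambient finitely complete category of compact triangulable spaces, is connected (hence a Reeb domain), and its projections are Reeb quotient maps by \cref{pullback} and \cref{composition}. This cone-existence statement is all that is invoked downstream, namely for the Reeb cones in \eqref{cone} and for the limit $L$ appearing in \cref{costs}; the universal property \emph{inside} $\PLReebDom$ is never used. So, relative to the paper, your proposal is not missing anything the paper supplies; rather, you have correctly located a gap that the paper's own proof shares, but your plan for filling it would fail, and the statement should instead be weakened (or read) as the existence of such limit cones with Reeb quotient legs, which your first two paragraphs already prove.
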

\begin{proof}
By \cref{composition}, the Reeb quotient maps are closed under composition and contain the identity maps of Reeb domains, so they form a category. This category has all pullbacks by \cref{pullback}, and the one-point space is a terminal object, so equivalently it has all finite limits \cite[Prop.~5.14~and~5.21]{Awodey}.
\end{proof}

\section{Stable and universal distances}
Throughout this paper, we will use the term \emph{distance} to describe an extended pseudo-metric $d: X\times X \to [0,\infty]$ on some collection $X$. Our main goal is the introduction of a distance between Reeb
graphs that is stable and universal in the following sense. 

\begin{definition}\label{defSIU}
We say that a distance $d_S$ on the objects of $\PLReebGrph$ is \emph{stable} if and only if given any two Reeb graphs $R_f$ and $R_g$ respectively endowed with Reeb functions $\tilde f$ and $\tilde g$, for any Reeb domain $X$ with Reeb quotient maps $p_f: X \to R_f$ and $p_g: X \to R_g$ we have
\begin{equation}
d_S(R_f,R_g)\le \|\tilde f\circ p_f-\tilde g\circ p_g\|_\infty. \tag{S}\label{stability}
\end{equation}
Note that stability implies that isomorphic Reeb graphs have distance $0$. Indeed, an isomorphism of Reeb graphs $\gamma:R_f\to R_g$ yields $d_S(R_f,R_g) \leq \|\tilde f\circ \id - \tilde g\circ \gamma\|_\infty=0$. 

Moreover, we say that a stable distance $d_U$ on the objects of $\PLReebGrph$ is {\em universal} if and only if for any other stable distance $d_S$ on $\PLReebGrph$, we have
\begin{equation}
d_S(R_f,R_g)\le d_U(R_f,R_g). \tag{U}\label{universality}
\end{equation}
\end{definition}

\begin{remark}\label{RemDef}
By connectedness of $R_f$ and $R_g$, there is at least one space $X$ with maps $p_f,p_g$ as needed to define the stability property: $X=R_f\times R_g$, with $p_f,p_g$ the canonical projections. The resulting functions $f = \tilde f\circ p_f , g = \tilde f\circ p_f : R_f\times R_g\rightarrow \R$ then satisfy $\|f-g\|_\infty = \max(\sup f, \sup g) - \min (\inf f, \inf g)$. In particular, for compact Reeb graphs a stable distance is always finite.
\end{remark}

The definition of stability yields the following canonical universal distance.

\begin{definition}\label{def_canonical}
For any two Reeb graphs $R_f$ and $R_g$ endowed with  Reeb graph functions  $\tilde f$ and $\tilde g$, let
\[\deltaPL(R_f,R_g):=\inf_{p_f:R_f\leftarrow X\rightarrow R_g: p_g} \|\tilde f\circ p_f-\tilde g\circ p_g\|_\infty,\]
where $X$ is any Reeb domain, and $p_f,p_g$ are Reeb quotient maps. 
\end{definition}

\begin{proposition}\label{canonical}
The distance $\deltaPL$ is the largest stable distance on $\PLReebGrph$. Hence, $\deltaPL$ is universal.
\end{proposition}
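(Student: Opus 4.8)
The plan is to split the statement into three parts—that $\deltaPL$ is a well-defined extended pseudo-metric, that it is itself stable, and that it dominates every other stable distance—and to observe that the last part is exactly the universality condition (U), so once the first two are established, ``largest stable distance'' and ``universal'' coincide. The easy axioms I would dispatch first: non-negativity is immediate, since $\deltaPL$ is an infimum of supremum norms; by \cref{RemDef} the family indexing the infimum is nonempty (witnessed by $X = R_f \times R_g$ with the canonical projections), so $\deltaPL$ is finite; symmetry is built into the definition, as the two maps play interchangeable roles; and $\deltaPL(R_f, R_f) = 0$ follows by taking $X = R_f$ with both maps the identity.

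The substantive step, which I expect to be the main obstacle, is the triangle inequality, and here the pullback machinery developed earlier is essential. Fix $\epsilon > 0$ and Reeb graphs $R_f, R_g, R_h$. I would choose near-optimal witnesses: a Reeb domain $X$ with Reeb quotient maps $p : X \to R_f$ and $q : X \to R_g$ satisfying $\|\tilde f \circ p - \tilde g \circ q\|_\infty \le \deltaPL(R_f, R_g) + \epsilon$, and a Reeb domain $Y$ with Reeb quotient maps $r : Y \to R_g$ and $s : Y \to R_h$ satisfying $\|\tilde g \circ r - \tilde h \circ s\|_\infty \le \deltaPL(R_g, R_h) + \epsilon$. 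The key construction is the pullback $Z = X \times_{R_g} Y$ over $q$ and $r$, with projections $\pi_X, \pi_Y$. By \cref{pullback} both projections are Reeb quotient maps, and then by \cref{composition} the composites $p \circ \pi_X : Z \to R_f$ and $s \circ \pi_Y : Z \to R_h$ are Reeb quotient maps as well, so $Z$ is an admissible witness for the pair $(R_f, R_h)$.

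It then remains to estimate the supremum norm on $Z$. The pullback pasting condition $q \circ \pi_X = r \circ \pi_Y$ forces the two lifts of $\tilde g$ to agree, so inserting this common term and applying the triangle inequality for $\|\cdot\|_\infty$ gives
\[
\|\tilde f \circ p \circ \pi_X - \tilde h \circ s \circ \pi_Y\|_\infty \le \|\tilde f \circ p \circ \pi_X - \tilde g \circ q \circ \pi_X\|_\infty + \|\tilde g \circ r \circ \pi_Y - \tilde h \circ s \circ \pi_Y\|_\infty.
\]
Because $\pi_X$ and $\pi_Y$ are surjective, precomposition leaves the supremum norm unchanged, so the right-hand side equals $\|\tilde f \circ p - \tilde g \circ q\|_\infty + \|\tilde g \circ r - \tilde h \circ s\|_\infty \le \deltaPL(R_f, R_g) + \deltaPL(R_g, R_h) + 2\epsilon$. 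Taking the infimum over admissible witnesses for $(R_f, R_h)$ and letting $\epsilon \to 0$ yields the triangle inequality, completing the verification that $\deltaPL$ is an extended pseudo-metric.

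Finally, stability and the largest property are formal consequences of the infimum definition. For stability, any admissible $X$ with maps $p_f, p_g$ gives $\deltaPL(R_f, R_g) \le \|\tilde f \circ p_f - \tilde g \circ p_g\|_\infty$ directly, which is precisely condition (S). For the largest property, if $d_S$ is any stable distance, then applying (S) to each admissible witness $X$ and taking the infimum over all of them yields $d_S(R_f, R_g) \le \deltaPL(R_f, R_g)$; this is exactly condition (U), so $\deltaPL$ is the largest stable distance and hence universal.
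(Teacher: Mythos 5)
Your proposal is correct and follows essentially the same route as the paper: the triangle inequality via the pullback $X \times_{R_g} Y$ (using \cref{pullback}, with \cref{composition} ensuring the composite legs are Reeb quotient maps), and stability and universality as formal consequences of the infimum definition. The only differences are cosmetic—you make the $\epsilon$-approximation and the closure-under-composition step explicit, which the paper leaves implicit.
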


\begin{proof}
To see that $\deltaPL$ is a distance, the only non-trivial part is showing the triangle inequality. To this end, given diagrams $p_f:R_f\leftarrow X\rightarrow R_g: p_g$ and $p'_g:R_g\leftarrow Y\rightarrow R_h: p_h$, we can pullback the diagram $p_g:X\rightarrow R_g\leftarrow Y:p'_g$ to obtain the diagram $q_X:X\leftarrow X\times_{R_g}Y\rightarrow Y: q_Y$, where $X\times_{R_g}Y$ is a Reeb domain and $q_X,q_Y$ are Reeb quotient maps by \cref{pullback}. 
Defining $f = \tilde f\circ p_f\circ q_X$, $g = \tilde g\circ p_g\circ q_X=\tilde g\circ p'_g\circ q_Y$, and $h = \tilde h\circ p_h\circ q_Y$, 
we have
\begin{align*}
\deltaPL(R_f,R_h) \leq \|f-h\|_\infty &\le \|f-g\|_\infty + \|g-h\|_\infty 
\\
&\leq
\|\tilde f\circ p_f - \tilde g\circ p_g\|_\infty + \|\tilde g\circ p'_g - \tilde h\circ p_h\|_\infty,
\end{align*}
where the last inequality holds because $\im q_X\subseteq X$ and $\im q_Y\subseteq Y$.
Hence, $\deltaPL(R_f,R_h)\le \deltaPL(R_f,R_g)+\deltaPL(R_g,R_h)$.
By definition of stability, $d_S\le \deltaPL$ for any stable distance $d_S$ defined on the objects of $\PLReebGrph$, implying that $\deltaPL$ is universal. 
\end{proof}

\begin{example}
Consider the one point Reeb graph $\ast_c$ endowed with the function identical to $c\in\mathbb{R}.$ Then, for any Reeb graph $R_f$ endowed with the function $\tilde{f}$, $\deltaPL(R_f,\ast_c) = \|\tilde{f}-c\|_\infty.$
\end{example}

\label{cylinder}

We now consider an example where we can explicitly determine the value of the distance $\deltaPL(R_f,R_g)$ between two specific simple Reeb graphs $R_{f} =  \mathbb{S}^1=\{(x,y)\in\R^2: x^2+y^2=1\}$ with $\tilde f(x,y)=x$ and $R_{g} = [-1,1]$ with $\tilde g(t)=t$. The example demonstrates the non-universality of certain distances proposed in the literature. 
We prove:
\begin{proposition} $\deltaPL(R_f,R_g)=1$.
\end{proposition}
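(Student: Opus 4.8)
The plan is to prove the two inequalities $\deltaPL(R_f,R_g)\le 1$ and $\deltaPL(R_f,R_g)\ge 1$ separately, the first by exhibiting an explicit common Reeb domain and the second by a topological argument on an arbitrary one.

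For the upper bound I would realize the value $1$ on a single annulus. Take $X=\mathbb{S}^1\times[-1,1]$ with angular coordinate $\phi$ and height coordinate $t$, and let $F\colon X\to\R$ depend only on $\phi$, sweeping monotonically from $-1$ up to $1$ and back down to $-1$ as $\phi$ traverses the circle (a tent). Its level sets are two arcs for each value in $(-1,1)$ and a single arc for $\pm1$, so its Reeb graph is isomorphic to $(R_f,\tilde f)$, inducing a Reeb quotient map $p_f\colon X\to R_f$ with $\tilde f\circ p_f=F$. Now set $G:=\tfrac12(t+F)$. I would verify that each level set $G^{-1}(c)$ is connected: it is the graph $\{t=2c-F(\phi)\}$ over the set $\{\phi:F(\phi)\in[2c-1,2c+1]\}$, and this admissible set of $\phi$ is always an interval on the circle (an end-arc around the maximum or the minimum of the tent, or the whole circle when $c=0$). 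Hence $G$ has interval Reeb graph $(R_g,\tilde g)=([-1,1],\id)$, giving a Reeb quotient map $p_g\colon X\to R_g$ with $\tilde g\circ p_g=G$. Since $\|F-G\|_\infty=\tfrac12\|t-F\|_\infty=1$, this shows $\deltaPL(R_f,R_g)\le 1$.

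For the lower bound, let $p_f\colon X\to R_f$ and $p_g\colon X\to R_g$ be arbitrary Reeb quotient maps, write $F=\tilde f\circ p_f$ and $G=\tilde g\circ p_g=p_g$, and suppose for contradiction that $\epsilon:=\|F-G\|_\infty<1$. Because $p_g$ is a Reeb quotient map and $\tilde g=\id$, the level set $G^{-1}(0)$ is connected, and $|F-G|\le\epsilon$ forces $p_f\bigl(G^{-1}(0)\bigr)\subseteq\tilde f^{-1}([-\epsilon,\epsilon])$. For $\epsilon<1$ the latter is the disjoint union of the two arcs of $\mathbb{S}^1$ near $(0,1)$ and near $(0,-1)$, so by connectedness $p_f\bigl(G^{-1}(0)\bigr)$ lies in just one of them, say the upper arc $U$. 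Writing $T=(1,0)$ and $B=(-1,0)$ for the maximum and minimum of $\tilde f$, we have $F\equiv 1$ on $p_f^{-1}(T)$ and $F\equiv -1$ on $p_f^{-1}(B)$, hence $G>0$ on $p_f^{-1}(T)$ and $G<0$ on $p_f^{-1}(B)$ since $\epsilon<1$. The lower arc $\ell\subseteq\mathbb{S}^1$ is closed and connected and contains both $T$ and $B$, so $p_f^{-1}(\ell)$ is connected by \cref{fibers}; as $G$ takes both signs on this connected set, it must vanish somewhere on it. But $G^{-1}(0)\subseteq p_f^{-1}(U)$ and $\ell\cap U=\emptyset$, so $p_f^{-1}(\ell)\cap G^{-1}(0)=\emptyset$, a contradiction. (If instead $p_f\bigl(G^{-1}(0)\bigr)$ lies in the lower arc, exchange the roles of the two arcs.) Therefore $\epsilon\ge 1$, giving $\deltaPL(R_f,R_g)\ge 1$.

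I expect the lower bound to be the main obstacle. A naive argument—taking a thin band $\tilde f^{-1}([-\delta,\delta])$, noting that its two preimage components must be joined inside a connected super- or sublevel set of $G$, and tracking heights through $|F-G|\le\epsilon$—only yields $\deltaPL\ge 1/2$, which is precisely the value of the interleaving and functional distortion distances in this example. The content of the proposition is that $\deltaPL$ is strictly larger, so one must instead use that a single connected level set $G^{-1}(0)$ sits entirely on one branch of the circle and thereby separates the maximum from the minimum; the forced crossing of the opposite branch is what costs the full value $1$, and this gap is exactly what exhibits the non-universality of those other distances.
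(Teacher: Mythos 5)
Your proof is correct and takes essentially the same approach as the paper: your annulus with $G=\tfrac12(t+F)$ is exactly the paper's cylinder $\{(x,y,z): x^2+y^2=1,\ |2z-x|\le 1\}$ in different coordinates, and your lower bound rests on the same ingredients (connectedness of the fiber $G^{-1}(0)$, the splitting of $\tilde f^{-1}([-\epsilon,\epsilon])$ into two arcs whose preimages are connected by \cref{fibers}, and an intermediate value argument). The only difference is bookkeeping in the contradiction: the paper applies the intermediate value theorem inside each of the two band components to show $\hat g^{-1}(0)$ must meet both of them, whereas you first confine $G^{-1}(0)$ to the preimage of one arc and then force a zero of $G$ on the preimage of the complementary closed semicircle through the extrema---an equivalent way of reaching the same conflict with connectedness.
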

The proof of this proposition will be obtained from the two claims below.
\begin{claim}
$\deltaPL(R_{f},R_{g}) \le 1$. 
\end{claim}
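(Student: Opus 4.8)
The plan is to verify the upper bound directly from the definition of $\deltaPL$ by exhibiting a single Reeb domain $X$ together with Reeb quotient maps $p_f \colon X \to R_f$ and $p_g \colon X \to R_g$ for which $\|\tilde f\circ p_f - \tilde g\circ p_g\|_\infty \le 1$. Since the naive choice $X = R_f\times R_g$ with the two coordinate projections only yields the bound $2$ (there $\tilde f\circ p_f$ and $\tilde g\circ p_g$ range independently over $[-1,1]$), the idea is to carve out of this cylinder exactly the region on which the two functions already agree up to $1$, and then check that the restricted projections remain Reeb quotient maps.

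Concretely, I would set
\[
X \;=\; \bigl\{\, (z,t)\in \mathbb{S}^1\times[-1,1] \;:\; |\tilde f(z)-t|\le 1 \,\bigr\},
\]
with $p_f(z,t)=z$ and $p_g(z,t)=t$ the restrictions of the two projections; here I use the PL model of $R_f=\mathbb{S}^1$ in which $\tilde f$ is the (piecewise linear) $x$-coordinate, so that $X$ is a genuine PL region of the cylinder. By construction $\tilde f\circ p_f(z,t)=\tilde f(z)$ and $\tilde g\circ p_g(z,t)=t$, so $|\tilde f\circ p_f-\tilde g\circ p_g|\le 1$ holds pointwise on $X$, giving $\|\tilde f\circ p_f-\tilde g\circ p_g\|_\infty\le 1$ immediately once $X$ is known to be a valid Reeb domain and $p_f,p_g$ Reeb quotient maps.

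The bulk of the work, and the main obstacle, is exactly this last verification: that $X$ is connected, compact and triangulable and that both maps are surjective with connected fibers. Writing $z=(\cos\phi,\sin\phi)$ so that $\tilde f(z)=\cos\phi$, the fiber of $p_f$ over $z$ is $\{t\in[-1,1] : |\cos\phi-t|\le 1\}$, the nonempty closed interval $[\max(-1,\cos\phi-1),\min(1,\cos\phi+1)]$; it always contains $t=0$, which at once shows surjectivity of $p_f$, connectedness of each $p_f$-fiber, and (since every vertical fiber meets the central circle $\{t=0\}\subseteq X$) connectedness of $X$ as a union of connected sets sharing the connected set $\{t=0\}$. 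For $p_g$ I would fix $t$ and describe the fiber $\{\phi : |\cos\phi-t|\le 1\}$: for $t\ge 0$ it reduces to $\{\phi : \cos\phi\ge t-1\}$, an arc around $\phi=0$, while for $t\le 0$ it reduces to $\{\phi : \cos\phi\le t+1\}$, an arc around $\phi=\pi$ (and the whole circle at $t=0$); in every case it is a nonempty connected arc, giving surjectivity and connected fibers for $p_g$.

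Finally I would record the routine points: $X$ is closed and bounded in the cylinder, hence compact, and in the PL model it is a polytopal (hence triangulable) subcomplex, so it is a legitimate Reeb domain; the two projections are PL, and together with the surjectivity and connected-fiber computations above this makes them Reeb quotient maps. Combining this with the pointwise bound then yields $\deltaPL(R_f,R_g)\le \|\tilde f\circ p_f-\tilde g\circ p_g\|_\infty\le 1$. The one technical subtlety to stay careful about is the PL structure, since $\cos\phi$ is not literally piecewise linear on the round circle; I would therefore phrase the entire construction through the PL representative of $(\mathbb{S}^1,\tilde f)$ (for instance the rhombus $\{|x|+|y|=1\}$ with $\tilde f=x$), on which all the inequalities above cut out honest PL regions and the projections are honest PL maps.
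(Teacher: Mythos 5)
Your proof is correct and takes essentially the same approach as the paper: the paper also exhibits a sub-band of the cylinder $\mathbb{S}^1\times\R$ (namely $\{(x,y,z): x^2+y^2=1,\ |2z-x|\le 1\}$) with the two coordinate projections as Reeb quotient maps, on which the functions differ by at most $1$. Your choice of the maximal region $\{|\tilde f(z)-t|\le 1\}$ instead of the paper's width-one strip, and your explicit attention to the PL model and the fiber/connectedness checks, are only minor (and welcome) variations on the same construction.
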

\begin{proof}
Consider the cylinder $C=\{(x,y,z)\in\R^3: x^2+y^2=1, \, |2z-x| \le 1\}$ together with functions $f(x,y,z)=x$ and $g(x,y,z)=z$ defined on $C$.
\begin{figure}[h]
\begin{center}
\includegraphics[width=1in]{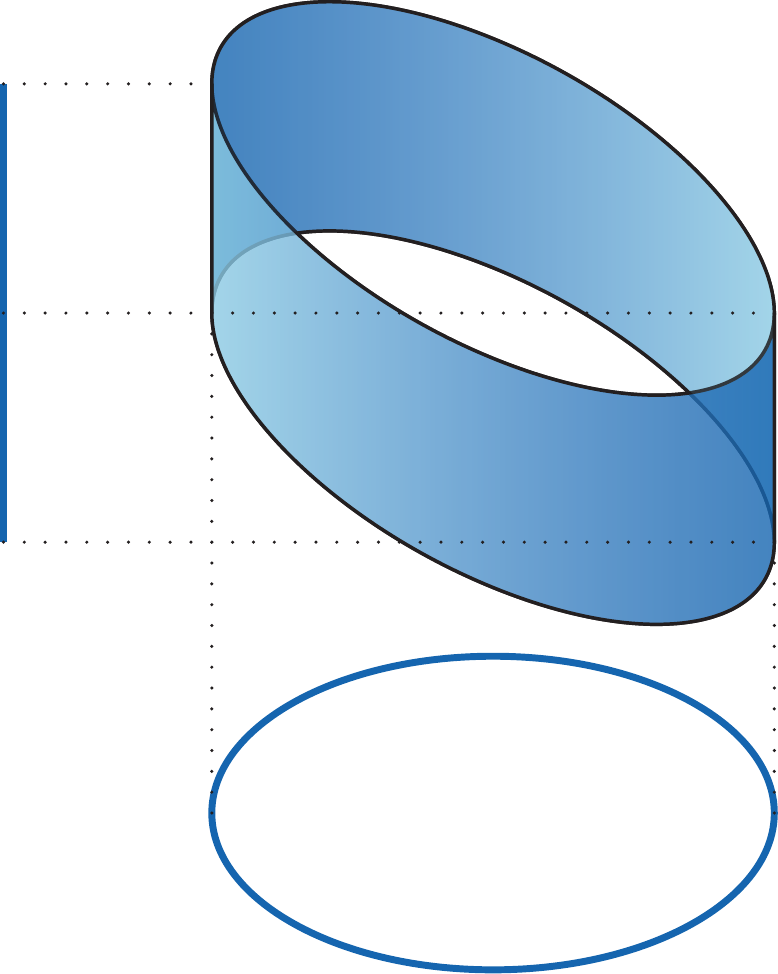}%
\vspace{-12pt}
\end{center}
\end{figure}
Then $R_f$ is a Reeb graph of $f$ via the Reeb quotient map $(x,y,z) \mapsto (x,y)$, and 
$R_g$ is a Reeb graph of $g$ via the Reeb quotient map $(x,y,z) \mapsto z$.
 Since we  have $|f(c)-g(c)| \leq 1$ for all $c\in C$, this implies that $\deltaPL(R_{f},R_{g}) \le 1$. 
\end{proof}
\begin{claim}\label{claim:lb}
$\deltaPL(R_{f},R_{g}) \ge 1$. 
\end{claim}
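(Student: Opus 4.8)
The plan is to argue by contradiction, starting from an arbitrary diagram: suppose there is a Reeb domain $X$ with Reeb quotient maps $p_f : X \to R_f$ and $p_g : X \to R_g$ such that, writing $F = \tilde f \circ p_f$ and $G = \tilde g \circ p_g$, we have $\epsilon := \|F - G\|_\infty < 1$; I will derive a contradiction, so that every diagram satisfies $\|F-G\|_\infty \ge 1$ and hence $\deltaPL(R_f,R_g)\ge 1$. The distinguishing feature of $R_f = \mathbb{S}^1$ against $R_g = [-1,1]$ is that the circle carries a nontrivial loop while the interval is contractible, and I will make this quantitative purely through the connectivity of preimages guaranteed by \cref{fibers}, avoiding any homological machinery.

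Concretely, let $P = (1,0)$ and $Q = (-1,0)$ be the points of $R_f$ where $\tilde f$ attains its maximum $1$ and its minimum $-1$. On the fiber $p_f^{-1}(P)$ one has $F \equiv 1$, hence $G \ge 1 - \epsilon > 0$; on $p_f^{-1}(Q)$ one has $F \equiv -1$, hence $G \le -1 + \epsilon < 0$. Splitting the circle into the two closed semicircles $U = \{(x,y)\in\mathbb{S}^1 : y \ge 0\}$ and $L = \{(x,y)\in\mathbb{S}^1: y \le 0\}$, both of which contain $P$ and $Q$, \cref{fibers} tells me that $p_f^{-1}(U)$ and $p_f^{-1}(L)$ are connected. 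Since the continuous function $G$ takes a positive value (on $p_f^{-1}(P)$) and a negative value (on $p_f^{-1}(Q)$) on each of these connected sets, the intermediate value theorem yields points $u^\ast \in p_f^{-1}(U)$ and $l^\ast \in p_f^{-1}(L)$ with $G(u^\ast) = G(l^\ast) = 0$.

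Next I consider the level set $N = G^{-1}(0) = p_g^{-1}(0)$, which is connected because it is a fiber of the Reeb quotient map $p_g$, and which contains both $u^\ast$ and $l^\ast$. On $N$ we have $G = 0$ and therefore $|F| \le \epsilon$, so $p_f(N)$ lies in the band $W = \{(x,y) \in \mathbb{S}^1 : |x| \le \epsilon\}$. The key observation is that, because $\epsilon < 1$, every point of $W$ satisfies $y^2 = 1-x^2 \ge 1-\epsilon^2 > 0$, so $W$ splits as a disjoint union of a top arc $W^+ = W \cap \{y > 0\}$ and a bottom arc $W^- = W \cap \{y < 0\}$, each clopen in $W$. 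The connected set $p_f(N)$ must therefore lie entirely in $W^+$ or entirely in $W^-$. But $p_f(u^\ast) \in U \cap W \subseteq W^+$ and $p_f(l^\ast) \in L \cap W \subseteq W^-$, so $p_f(N)$ meets both components, a contradiction.

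The step I expect to require the most care is the final one: verifying that $p_f(N)$ genuinely meets both arcs of the band and that $W$ is honestly disconnected. This is exactly where the threshold $\epsilon < 1$ is used, and it pins down the constant precisely: if $\epsilon \ge 1$ the band $W$ would reach $x=\pm1$, acquire points with $y=0$, and stay connected, so that the separation $W = W^+ \sqcup W^-$ collapses and no contradiction arises. Everything else reduces to the connectivity of preimages of closed connected sets (\cref{fibers}), the connectivity of the single fiber $p_g^{-1}(0)$, and elementary intermediate-value reasoning, so no smoothness, triangulability, or homological input beyond \cref{fibers} is needed.
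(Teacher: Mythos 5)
Your proof is correct and follows essentially the same strategy as the paper's: both derive a contradiction from the fact that the connected fiber $p_g^{-1}(0)$ must meet both arcs of the band $\tilde f^{-1}([-\epsilon,\epsilon])\subset\mathbb{S}^1$, which is disconnected precisely because $\epsilon<1$, with \cref{fibers} supplying connectedness of preimages and the intermediate value theorem locating the zeros of $G$. The only difference is organizational: the paper splits the preimage $\hat f^{-1}([-\epsilon,\epsilon])$ into two connected components upstairs in the common domain and shows the zero fiber meets both, whereas you push the zero fiber forward to the circle via $p_f$ (and locate zeros via the semicircle preimages and the fibers over the poles $(\pm 1,0)$); the two formulations are equivalent.
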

\begin{proof}
Assume for a contradiction that there is a diagram $p_f:R_f\leftarrow Z\to R_g: p_g$ of Reeb quotient maps such that,
letting
$\hat f = \tilde f \circ p_f$ and $\hat g = \tilde g \circ p_g$,
we have $\| \hat f -\hat g \|_\infty = \delta < 1.$ We then observe the following:
\begin{itemize}
\item $\hat g^{-1}(0) \subseteq \hat f^{-1}([-\delta,+\delta])$.
\item $\tilde f^{-1}([-\delta,+\delta])$ consists of two circular arcs homeomorphic by $\tilde f$ to $[-\delta,+\delta]$, and thus, by \cref{fibers}, $\hat f^{-1}([-\delta,+\delta])%
$ consists of two connected components $C_+$ and $C_-$ as well. 
\item For both components we have $\hat f(C_\pm) = [-\delta,\delta]$, and so $\|\hat f -\hat g \|_\infty = \delta$ implies that $0 \in \hat g(C_\pm)$.
Thus $\hat g^{-1}(0)\cap C_-\neq \emptyset$ and  $\hat g^{-1}(0)\cap C_+\neq \emptyset$. 
\end{itemize}
But since $\hat g^{-1}(0)\subseteq 
C_-\sqcup C_+$, this would contradict the assumption that the fiber $\hat g^{-1}(0)$ is connected.
\end{proof}

The current example illustrates that the \emph{functional distortion distance} introduced in \cite{Bauer2014Measuring} and the \emph{interleaving distance} introduced in \cite{deSilva2016Categorified} both fail to be universal. We first recall the definition of the former. For any Reeb graph $R_f$ with Reeb function $\tilde f$, consider the metric on $R_f$ given by 
\[d_f(x,y)=\inf\{b-a \mid x,y \text{ are in the same connected component of } \tilde f^{-1}([a,b])\}.\]
Given maps $\phi: R_f \to R_g$ and $\psi: R_g \to R_f$, we write
\[
G(\phi,\psi)=\big\{(p,\phi(p)):p\in R_f\}\cup\{(\psi(q),q):q\in R_g\big\} \]
for the correspondences induced by the two maps.
The functional distortion distance is
\[
\dFD(R_f, R_g) = \inf_%
{\phi,\psi}(
\max \big\{ \sup_{(p,q),(p',q')\in G(\phi,\psi)}\frac12\left|d_f(p,p')-d_g(q,q')\right|, \|f-g\circ\phi\|_\infty, \|f\circ\psi-g\|_\infty \big\}).  
\]

To see that neither the functional distortion distance nor the interleaving distance are universal we establish:
\begin{proposition}
$d_{I}(R_f,R_g) 
\leq \dFD(R_f,R_g) \leq \frac12.$
\end{proposition}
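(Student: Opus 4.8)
The plan is to treat the two inequalities separately. The first inequality $d_I(R_f,R_g) \le \dFD(R_f,R_g)$ is not special to this example: it is exactly the easy direction of the bi-Lipschitz equivalence established in \cite{BaMuWa15}, so I would simply invoke that result. All the real work goes into the bound $\dFD(R_f,R_g) \le \tfrac12$, which I would prove by exhibiting a \emph{single} pair of maps $\phi,\psi$ realizing it, since $\dFD$ is defined as an infimum over all such pairs.

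For the maps I would take $\phi\colon R_f \to R_g$ to be the projection $\phi(x,y)=x$ (that is, $\tilde f$ viewed as a map into $R_g=[-1,1]$), and $\psi\colon R_g \to R_f$ to be the parametrization of the upper semicircle, $\psi(t)=(t,\sqrt{1-t^2})$. With these choices $\tilde g\circ\phi(x,y)=x=\tilde f(x,y)$ and $\tilde f\circ\psi(t)=t=\tilde g(t)$, so both sup-norm terms $\|f-g\circ\phi\|_\infty$ and $\|f\circ\psi-g\|_\infty$ vanish. Hence $\dFD(R_f,R_g)$ is bounded by the distortion term alone, and it remains to show that this term equals $\tfrac12$.

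To evaluate the distortion term I first need the two metrics. On $R_g=[-1,1]$ the level sets are points, so $d_g(s,t)=|s-t|$. On the circle $R_f$, the set $\tilde f^{-1}([a,b])$ is a pair of arcs that become connected only when $[a,b]$ reaches an endpoint $\pm1$; a short case analysis then gives, for $p=(x,y)$ and $p'=(x',y')$, that $d_f(p,p')=|x-x'|$ when $p,p'$ lie on the same (upper or lower) arc, and $d_f(p,p')=\min(1-\min(x,x'),\,1+\max(x,x'))$ when they lie on opposite arcs. Since $\psi$ lands in the upper arc, the graph of $\psi$ is contained in the graph of $\phi$, so $G(\phi,\psi)$ is exactly $\{((x,y),x):(x,y)\in R_f\}$, and the distortion term is the supremum of $\tfrac12\,|d_f(p,p')-|x-x'||$ over such pairs.

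The heart of the argument, and the step I expect to require the most care, is bounding this supremum. For same-arc pairs the contribution is $0$. For opposite-arc pairs, writing $s=\min(x,x')$ and $u=\max(x,x')$, the expression simplifies to $\tfrac12\min(1-u,\,1+s)$, which is nonnegative because $d_f(p,p')\ge|x-x'|$ always. Using $s\le u$ one then gets $\min(1-u,1+s)\le\tfrac12\bigl((1-u)+(1+s)\bigr)=1-\tfrac12(u-s)\le 1$, so the distortion never exceeds $\tfrac12$, with equality precisely at the antipodal pair $p=(0,1)$, $p'=(0,-1)$, where $d_f(p,p')=1$ while $|x-x'|=0$. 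This shows the distortion term equals $\tfrac12$, whence $\dFD(R_f,R_g)\le\tfrac12$, completing the proof.
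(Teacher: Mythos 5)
Your proof is correct and takes essentially the same route as the paper: cite \cite{BaMuWa15} for $d_I \le \dFD$, then exhibit the same pair of maps $\phi(x,y)=x$ and $\psi(t)=(t,\sqrt{1-t^2})$, observe the sup-norm terms vanish, and bound the distortion term by $\tfrac12$. The only difference is that you compute $d_f$ explicitly via the same-arc/opposite-arc case analysis where the paper merely asserts the sandwich bound $|\tilde f(p)-\tilde f(p')|\le d_f(p,p')\le|\tilde f(p)-\tilde f(p')|+1$, so your write-up just fills in the paper's ``one can verify'' step.
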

\begin{proof}
 By \cite[Lemma~8]{BaMuWa15}, the functional distortion distance is an upper bound on the interleaving distance on Reeb graphs \cite{deSilva2016Categorified}, and so it is enough to prove that $\dFD(R_f,R_g) \leq \frac12.$ To this end consider the maps $\phi:R_f\to R_g, ~ (x,y) \mapsto x \quad \text{and} \quad \psi:R_g\to R_f, ~ t \mapsto \left(t, \sqrt{1-t^2}\right).$
For every pair $p, p' \in R_f$ one can verify that $ |\tilde f(p)-\tilde f(p')|\leq d_f(p, p') \leq |\tilde f(p)- f(p')|+1$, while for every pair $q,  q' \in R_g$, we have $d_g(q, q') = |\tilde g(q)-\tilde g(q')|$. This implies that for any two corresponding pairs $(p,q),(p',q')\in G(\phi,\psi)$, we have $|d_f(p, p')-d_g(q,q')|\leq 1$, and thus $D(\phi,\psi) \leq \frac12$. Moreover, both maps preserve function values, so $\dFD(R_f,R_g) \leq \frac12$. 
\end{proof}

\section{The topological and graph edit distances}

Given a pair of Reeb graphs $R_f, R_g$, consider a diagram of the form
\begin{equation}\label{zig-zag}
\begin{tikzcd}[column sep={12.5mm,between origins},row sep=5.5mm]
\R && \R && && \R && \R\\
R_f = R_1\ar[u,"\tilde f_1"] && R_2\ar[u,"\tilde f_2"] &&\cdots&& R_{n-1}\ar[u,"\tilde f_{n-1}"]&&R_n = R_g\ar[u,"\tilde f_n"]\\
&X_1 \ar[ul,two heads] \ar[ur,two heads] &&X_2\ar[ul,two heads]\ar[ur,dotted]&&X_{n-2}\ar[ul,dotted]\ar[ur,two heads]&&X_{n-1}\ar[ul,two heads]\ar[ur,two heads]
\end{tikzcd}
\end{equation}
where for $n\in\N$ $\tilde f_1,\ldots,\tilde f_n$ are Reeb functions with $\tilde f_1=\tilde f$ and $\tilde f_n=\tilde g$, and the maps
$X_i \to R_i, R_{i+1}$ for $i=1,\ldots,n-1$, are Reeb quotient maps.
We call the diagram a \emph{Reeb zigzag diagram} between $R_f$ and $R_g$. Observe that, by \cref{RemDef}, between any two Reeb graphs $R_f$ and $R_g$ there exists a Reeb zigzag diagram.

A Reeb zigzag diagram can be regarded as being composed of the following elementary diagrams:
\begin{equation*}
\begin{tikzcd}[column sep={12.5mm,between origins},row sep=5.5mm]
& \R \\
& R_{i}\ar[u,"\tilde f_{i}"]\\
X_{i-1}\ar[ur,two heads]&&X_{i}\ar[ul,two heads]
\end{tikzcd}
\qquad
\qquad
\begin{tikzcd}[column sep={12.5mm,between origins},row sep=5.5mm]
 \R && \R\\
R_{i}\ar[u,"\tilde f_{i}"]&&R_{i+1}\ar[u,"\tilde f_{i+1}"]\\
&X_{i}\ar[ul,two heads]\ar[ur,two heads]
\end{tikzcd}
\end{equation*}
This way, we may think of a Reeb zigzag diagram as a sequence of operations transforming the $R_f$ into $R_g$. The elementary diagram on the left corresponds to an \emph{edit} operation: the space $X_{i-1}$, together with a function $X_{i-1}\to \R$ with Reeb graph $R_i$, is transformed to another space $X_{i}$, with a function $X_{i}\to \R$ having the same Reeb graph $R_i$.
The elementary diagram on the right corresponds to a \emph{relabel} operation: the function on $X_i$ with Reeb graph $R_i$ is transformed to another function with Reeb graph $R_{i+1}$. The idea of edit and relabel operations is inspired by previous work on edit distances for Reeb graphs \cite{DiFabio2016Edit,Bauer2016An}.

In order to define an edit distance using Reeb zigzag diagrams, we need to
assign a cost to a given Reeb zigzag diagram between $R_f$ and $R_g$. To that end, we can consider a cone from a space~$V$ by Reeb quotient maps $V \to R_i$:
\begin{equation}\label{cone}
\begin{tikzcd}[column sep={12.5mm,between origins},row sep=5.5mm]
\R && \R && && \R && \R\\
R_1\ar[u,"\tilde f_1"] && R_2\ar[u,"\tilde f_2"] &&\cdots&& R_{n-1}\ar[u,"\tilde f_{n-1}"]&&R_n\ar[u,"\tilde f_n"]\\
&X_1 \ar[ul,two heads] \ar[ur,two heads] &&X_2\ar[ul,two heads]\ar[ur,two heads,dotted]&\cdots&X_{n-2}\ar[ul,two heads,dotted]\ar[ur,two heads]&&X_{n-1}\ar[ul,two heads]\ar[ur,two heads]\\
&&&&V
\ar[ulll,two heads]
\ar[ul,two heads]
\ar[ur,two heads]
\ar[urrr,two heads]
&&&&\
\end{tikzcd}
\end{equation}
We call this diagram a \emph{Reeb cone}. Any Reeb zigzag diagram admits such a cone. Indeed, the category $\PLReebDom$ has all finite limits by \cref{ReebCategory}, and the limit over the lower part of diagram \eqref{zig-zag}, consisting of Reeb quotient maps, yields a limit over the whole diagram.
In a Reeb cone, by commutativity, each of the Reeb functions $\tilde f_i$ induces a unique function $f_i:V \to \R$. By \cref{homeoR}, the Reeb graph of $f_i$ is isomorphic to $R_i$.
This way, we pull back the individual functions $\tilde f_i$ to functions $f_i$  on a common space with the same Reeb graphs, where they can be compared using the supremum norm.

Using these ideas, we can now introduce distances on the objects of $\PLReebGrph$, and proceed to prove that they are stable and universal. 

\begin{definition}\label{costs}
Given a Reeb cone from a space $V$ as in \eqref{cone}, we define the \emph{spread} of the functions
$(f_i)_{i=1,\ldots,n}:V \to \R$, as the function $s^V:V\to\R$, $x \mapsto \max_{i=1,\ldots,n}f_i(x) - \min_{j=1,\ldots,n}f_j(x)$. 
Moreover, for a Reeb zigzag diagram $Z$ between $R_f$ and $R_g$ as in \eqref{zig-zag}, consider the limit of $Z$, denoted by $L$.
The \emph{cost} of the Reeb zigzag diagram $Z$ is the
supremum norm of the spread $s^L$,
\[
c_Z := \|s^L\|_\infty = \sup_{x\in L}\left(\max_i f_i(x) -\min_j f_j(x)\right).
\]
\end{definition}

\begin{definition}\label{distances}
We define the \emph{(PL) edit distance} $\deltaEPL$ between Reeb graphs $R_f$ and $R_g$ in $\PLReebGrph$ as the infimum cost of all
Reeb zigzag diagrams $Z$ in $\PLReebDom$ between $R_f$ and $R_g$:
\[\deltaEPL(R_f,R_g)=\inf_{Z} c_{Z}.\]
Moreover, we define the 
\emph{graph edit distance} $\deltaEGraph$ between Reeb graphs $R_f$ and $R_g$ in $\PLReebGrph$ analogously by restricting the infimum to Reeb zigzag diagrams $Z$ where all the
spaces $X_i$ and $R_i$ are finite topological graphs, and all the maps are PL.
\end{definition}
Thus, on $\PLReebGrph$ we have two edit distances, satisfying
\begin{equation}\label{eT<eG}
\deltaEPL\le \deltaEGraph.
 \end{equation}
The Reeb graph edit distance $\deltaEGraph$ is a categorical reformulation of the definition given in \cite{Bauer2016An}. The main goal is to prove that these distances have the
stability and universality properties (\cref{stabilityPL,univPL,stabilityGraph,univGraph}). As a consequence, whenever
applicable, they actually coincide with the canonical universal distance $\deltaPL$ defined in \cref{def_canonical}:

\begin{corollary}
\(\deltaPL=\deltaEPL= \deltaEGraph.\)
\end{corollary}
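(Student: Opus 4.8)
The plan is to treat this equality as a purely formal consequence of the fact that a distance which is simultaneously stable and universal is unique. First I would record this uniqueness principle: if $d_1$ and $d_2$ are both stable and universal distances on $\PLReebGrph$, then applying universality of $d_2$ to the stable distance $d_1$ gives $d_1 \le d_2$, while applying universality of $d_1$ to the stable distance $d_2$ gives $d_2 \le d_1$; hence $d_1 = d_2$. Recall that by \cref{defSIU} universality is only defined for distances that are already stable, so each candidate distance carries both properties once the corresponding theorems are in hand.

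Next I would invoke the three pairs of properties. By \cref{canonical}, the canonical distance $\deltaPL$ is the largest stable distance and is therefore universal (and, being of the required form, stable). By \cref{stabilityPL,univPL}, the edit distance $\deltaEPL$ is stable and universal. By \cref{stabilityGraph,univGraph}, the graph edit distance $\deltaEGraph$ is stable and universal. Applying the uniqueness principle pairwise then yields $\deltaPL = \deltaEPL$ and $\deltaPL = \deltaEGraph$, and hence the chain $\deltaPL = \deltaEPL = \deltaEGraph$. Spelled out for the first equality, the two inequalities read $\deltaEPL \le \deltaPL$ (universality of $\deltaPL$ applied to the stable $\deltaEPL$) and $\deltaPL \le \deltaEPL$ (universality of $\deltaEPL$ applied to the stable $\deltaPL$); the second equality is obtained identically.

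There is essentially no obstacle in the corollary itself: all of its content is carried by the four stability and universality theorems it cites, and the argument is only the bookkeeping that combines them. If one wishes to economize on the cited results, the already-established inequality \eqref{eT<eG}, namely $\deltaEPL \le \deltaEGraph$, can be used to compress the argument into the single cycle $\deltaEPL \le \deltaEGraph \le \deltaPL \le \deltaEPL$ (using \eqref{eT<eG}, then stability of $\deltaEGraph$ against the universal $\deltaPL$, then universality of $\deltaEPL$ against the stable $\deltaPL$), which forces all three to coincide. I would nonetheless prefer the symmetric formulation via the uniqueness principle, since it makes the role of each hypothesis transparent and does not privilege any one of the three distances.
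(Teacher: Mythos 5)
Your proposal is correct and matches the paper's approach: the paper states this corollary as an immediate consequence of \cref{canonical,stabilityPL,univPL,stabilityGraph,univGraph}, which is exactly the uniqueness principle you formalize (any two stable and universal distances bound each other via universality applied to the other's stability). Your compressed cycle via \eqref{eT<eG} is also valid, but the symmetric argument is the one implicit in the paper.
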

The proofs of stability and universality for $\deltaEPL$ are straightforward and are given next. The verification of stability and universality for $\deltaEGraph$ follows in Section \ref{sec:rged}.

\begin{proposition}\label{stabilityPL}
$\deltaEPL$ is a stable distance.
\end{proposition}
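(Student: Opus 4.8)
The plan is to verify the two requirements separately: that $\deltaEPL$ is an extended pseudometric, and that it satisfies the stability bound \eqref{stability}. Nonnegativity, symmetry, and vanishing on a single Reeb graph are immediate. The spread $s^L$ is nonnegative by \cref{costs}, so $\deltaEPL \ge 0$; reversing the order of a Reeb zigzag diagram does not change the (unordered) family of induced functions $(f_i)$ on its limit, hence leaves the cost $c_Z$ unchanged, giving symmetry; and the trivial span $R_f \leftarrow R_f \rightarrow R_f$ with identity maps has limit $R_f$ and a single induced function $\tilde f$, so its spread vanishes and $\deltaEPL(R_f,R_f)=0$. Finiteness for compact Reeb graphs follows from \cref{RemDef}.

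For stability I would simply exhibit a diagram realizing the right-hand side of \eqref{stability}. Given a Reeb domain $X$ with Reeb quotient maps $p_f:X\to R_f$ and $p_g:X\to R_g$, the span $R_f \leftarrow X \rightarrow R_g$ is a Reeb zigzag diagram (the case $n=2$), and the limit of its lower part is $X$ itself, with induced functions $\tilde f\circ p_f$ and $\tilde g\circ p_g$. Its spread at $x$ is $|\tilde f\circ p_f(x)-\tilde g\circ p_g(x)|$, so $c_Z=\|\tilde f\circ p_f-\tilde g\circ p_g\|_\infty$, and therefore $\deltaEPL(R_f,R_g)\le \|\tilde f\circ p_f-\tilde g\circ p_g\|_\infty$, which is exactly \eqref{stability}.

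The hard part is the triangle inequality. Given Reeb zigzag diagrams $Z_1$ from $R_f$ to $R_g$ and $Z_2$ from $R_g$ to $R_h$, I would concatenate them along their shared copy of $R_g$ into a single zigzag $Z$ from $R_f$ to $R_h$, whose limit $L$ exists by \cref{ReebCategory}. Because $L$ is a cone over each of the two sub-diagrams, the universal property of limits yields maps $L\to L_1$ and $L\to L_2$ to the limits of $Z_1$ and $Z_2$, compatible with all projections; consequently, at each $x\in L$, the spread taken over only the $Z_1$-induced functions equals $s^{L_1}$ evaluated at the image of $x$, hence is at most $c_{Z_1}$, and symmetrically the $Z_2$-spread is at most $c_{Z_2}$. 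The structural key is that the function induced by $\tilde g$ belongs to both families, so at every $x$ the two value sets share a common real value. The remaining ingredient is a pointwise elementary estimate: for finite sets $A,B\subset\R$ with $A\cap B\neq\emptyset$ one has $\max(A\cup B)-\min(A\cup B)\le(\max A-\min A)+(\max B-\min B)$, which I would apply with the common value supplied by the $\tilde g$-induced function. This bounds the full spread $s^L(x)$ by $c_{Z_1}+c_{Z_2}$ for every $x$; taking the supremum over $x$ gives $c_Z\le c_{Z_1}+c_{Z_2}$, and taking infima over $Z_1$ and $Z_2$ yields $\deltaEPL(R_f,R_h)\le\deltaEPL(R_f,R_g)+\deltaEPL(R_g,R_h)$. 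I expect the only genuine obstacle to be making the two limit-comparison maps precise and identifying the shared $\tilde g$-induced function as common to both halves; once these are in place, the inequality is pure bookkeeping.
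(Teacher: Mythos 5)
Your verification of the stability inequality \eqref{stability} is exactly the paper's proof of \cref{stabilityPL}: the span $R_f \leftarrow X \rightarrow R_g$ is a Reeb zigzag diagram with $n=2$, its limit is $X$ itself, the induced functions are $\tilde f\circ p_f$ and $\tilde g\circ p_g$, and the cost is $\|\tilde f\circ p_f-\tilde g\circ p_g\|_\infty$, which gives the bound. Where you genuinely depart from the paper is in the pseudometric axioms: the paper's proof of this proposition checks only \eqref{stability} and never proves the triangle inequality for $\deltaEPL$ directly. Instead, that property is inherited a posteriori: \eqref{stability}, with the infimum taken over all $X,p_f,p_g$, gives $\deltaEPL\le\deltaPL$; universality of $\deltaEPL$ (\cref{univPL}) applied to the stable distance $\deltaPL$ gives $\deltaPL\le\deltaEPL$; hence $\deltaEPL=\deltaPL$, and the triangle inequality for $\deltaPL$ is what is actually proven (via a pullback of two spans) in \cref{canonical}. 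Your direct argument is correct: concatenating $Z_1$ and $Z_2$ along the shared copy of $R_g$ yields a valid Reeb zigzag diagram whose limit $L$ exists by \cref{ReebCategory}; the universal property in $\PLReebDom$ gives comparison morphisms $L\to L_1$ and $L\to L_2$ commuting with all projections, so the induced functions on $L$ indexed by $Z_1$ (resp.\ $Z_2$) factor through $L_1$ (resp.\ $L_2$) and their partial spreads are bounded by $c_{Z_1}$ (resp.\ $c_{Z_2}$); and since the $\tilde g$-induced function belongs to both families, the two value sets at each point share an element, so the estimate $\max(A\cup B)-\min(A\cup B)\le(\max A-\min A)+(\max B-\min B)$ for sets with $A\cap B\neq\emptyset$ applies pointwise. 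This is in effect a zigzag-level generalization of the pullback argument of \cref{canonical}. What your route buys is a self-contained proof that $\deltaEPL$ is a distance without invoking universality; what the paper's route buys is brevity, at the price of the triangle inequality for $\deltaEPL$ being available only after \cref{univPL}. One cosmetic point: the trivial zigzag $R_f\leftarrow R_f\rightarrow R_f$ induces two functions on its limit, both equal to $\tilde f$, rather than literally a single one; the spread is of course still zero.
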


\begin{proof}
Let $R_f, R_g$ be Reeb graphs with Reeb functions $\tilde f$ and $\tilde g$. For any space $X$ such that there exist two Reeb quotient maps $p_f:X\to R_f$ and $p_g:X\to R_g$, the diagram
\[
\begin{tikzcd}[column sep={12.5mm,between origins},row sep=5.5mm]
\R && \R \\
R_f \ar[u,"\tilde f"] && R_g\ar[u,"\tilde g",swap] \\
&X \ar[ul,two heads,"p_f"] \ar[uul,"f",swap] \ar[ur,two heads,swap,"p_g"] \ar[uur,"g"]
\end{tikzcd}
\]
is a Reeb zigzag diagram with limit object $X$.
The cost of this Reeb zigzag diagram is
exactly $\|f-g\|_\infty$. Hence, $\deltaEPL(R_{f},R_{g})\le
\|f-g\|_\infty$.
\end{proof}

Our proof of universality of the edit distance is similar to previous universality proofs for the bottleneck distance \cite{dAmico} and for the interleaving distance \cite{Lesnick}.

\begin{proposition}\label{univPL}
$\deltaEPL$ is a universal distance.
\end{proposition}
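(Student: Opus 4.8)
The plan is to show that $\deltaEPL$ dominates every stable distance, i.e.\ that for any stable distance $d_S$ and any pair of Reeb graphs $R_f,R_g$ we have $d_S(R_f,R_g)\le\deltaEPL(R_f,R_g)$. Since $\deltaEPL$ is defined as an infimum over Reeb zigzag diagrams, it suffices to fix an arbitrary Reeb zigzag diagram $Z$ between $R_f$ and $R_g$ as in \eqref{zig-zag} and prove that $d_S(R_f,R_g)\le c_Z$; taking the infimum over all $Z$ then yields universality. The natural strategy is to exploit the triangle inequality of $d_S$ together with stability applied to each consecutive pair $R_i,R_{i+1}$.

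First I would pass to the limit $L$ of $Z$, which exists by \cref{ReebCategory}, together with its induced Reeb quotient maps $\pi_i:L\to R_i$ and the pulled-back functions $f_i=\tilde f_i\circ\pi_i:L\to\R$; by \cref{homeoR} the Reeb graph of each $f_i$ is (isomorphic to) $R_i$. The key observation is that for each $i=1,\ldots,n-1$, the space $L$ carries two Reeb quotient maps $\pi_i:L\to R_i$ and $\pi_{i+1}:L\to R_{i+1}$, so stability of $d_S$ applied with $X=L$ gives
\[
d_S(R_i,R_{i+1})\le\|f_i-f_{i+1}\|_\infty.
\]
Chaining these via the triangle inequality for $d_S$ yields
\[
d_S(R_f,R_g)=d_S(R_1,R_n)\le\sum_{i=1}^{n-1}d_S(R_i,R_{i+1})\le\sum_{i=1}^{n-1}\|f_i-f_{i+1}\|_\infty.
\]

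It then remains to bound this telescoping-style sum by the cost $c_Z=\|s^L\|_\infty$. The point is that for every fixed $x\in L$, the pointwise quantity $\sum_{i}|f_i(x)-f_{i+1}(x)|$ need not be controlled by the spread $s^L(x)=\max_i f_i(x)-\min_j f_j(x)$, so a naive pointwise summation is too lossy. The clean fix is to sum over consecutive \emph{edges} more carefully: the main obstacle is precisely that $d_S$ is a priori only an arbitrary stable distance, so I cannot appeal to any finer structure than stability and the triangle inequality. To circumvent this I would instead apply stability directly to the pair $R_1,R_n$ via a single use of stability is impossible; instead, for each consecutive pair I bound $\|f_i-f_{i+1}\|_\infty\le\|s^L\|_\infty=c_Z$, since $|f_i(x)-f_{i+1}(x)|\le\max_k f_k(x)-\min_k f_k(x)=s^L(x)$ for every $x$. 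This gives $d_S(R_i,R_{i+1})\le c_Z$ for each $i$, but summing would introduce an unwanted factor of $n-1$.

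To avoid that factor, the correct approach is to refine the diagram rather than sum crude bounds: I would observe that the limit $L$ simultaneously dominates all the $R_i$, and in particular provides, for \emph{any} pair $R_i,R_j$, a common space with Reeb quotient maps $\pi_i,\pi_j$, whence stability yields $d_S(R_i,R_j)\le\|f_i-f_j\|_\infty\le c_Z$ directly. Applying this to the extreme pair $i=1$, $j=n$ gives immediately
\[
d_S(R_f,R_g)=d_S(R_1,R_n)\le\|f_1-f_n\|_\infty\le\|s^L\|_\infty=c_Z,
\]
with no triangle inequality and no factor of $n$ at all. Taking the infimum over all Reeb zigzag diagrams $Z$ between $R_f$ and $R_g$ then gives $d_S(R_f,R_g)\le\deltaEPL(R_f,R_g)$, and since $d_S$ was an arbitrary stable distance, $\deltaEPL$ is universal. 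The one point requiring care—hence the main obstacle—is verifying that the limit $L$ genuinely supplies Reeb quotient maps to every $R_i$ (not merely to the $X_j$), which is exactly what the finite completeness of $\PLReebDom$ together with \cref{composition} guarantees, since $\pi_i$ factors as the composite of a limit projection to some $X_j$ followed by the Reeb quotient map $X_j\to R_i$.
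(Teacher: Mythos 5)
Your final argument is correct and is essentially the paper's own proof: both use the limit $L$ of the zigzag, whose induced Reeb quotient maps to $R_1=R_f$ and $R_n=R_g$ allow a single application of stability, followed by the pointwise bound $|f_1(x)-f_n(x)|\le s^L(x)$ to get $d_S(R_f,R_g)\le c_Z$, and then the infimum over zigzags. The preliminary detour through pairwise chaining (with its factor of $n-1$) is correctly discarded and does not affect the validity of the proof you settle on.
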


\begin{proof}
Let $R_f, R_g$ be Reeb graphs with Reeb functions $\tilde f$ and $\tilde g$. Let $\deltaEPL(R_f,R_g)=d$. Hence,
for any $\eps>0$, there is a Reeb zigzag diagram $Z$ between $R_f=R_1$ and $R_g=R_n$, with limit $L$ and functions $f_i$ as in \cref{costs}, having cost \[c_{Z} = \|s^L\|_\infty = \|\max_i f_i-\min_j f_j\|_\infty \le d+\eps.\]
Let $p_f: L \to R_f$ and $p_g: L \to R_g$ be the induced Reeb quotient maps.
If
$d_S$ is any other stable distance (cf. Definition \ref{defSIU}) between $R_f$ and $R_g$, we have
\[d_S(R_f,R_g) \leq \|\tilde f\circ p_f-\tilde g\circ p_g\|_\infty
\leq
\|\max_i f_i-\min_j f_j\|_\infty \le d+\eps
.\]
Since the above holds for all $\eps > 0$, we have
$d_S(R_f,R_g) \le d = \deltaEPL(R_f,R_g).$
\end{proof}

\section{Stability and universality of the Reeb graph edit distance}\label{sec:rged}

We now turn to the proof of stability and universality for the Reeb graph edit distance.
Recall that, in the case of $\deltaEGraph$, the admissible Reeb zigzag diagrams are PL zigzags of finite topological graphs.  As mentioned above, the distance $\deltaEGraph$ is applicable to Reeb graphs of compact triangulable spaces. 

\begin{lemma}\label{phi}
Let $X=|K|$ and let $V$ be the vertex set of $K$. Let $f,g:X\to \R$ be PL functions, simplexwise linear on $K$. Let $\chi:\im f\to \im g$ be a weakly order preserving PL surjection such that $\chi \circ f(v)=g(v)$ for every vertex $v\in V$. Then there is a Reeb quotient map $X/{\sim}_f \to X/{\sim}_g$.
\end{lemma}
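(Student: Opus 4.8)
The plan is to factor the desired map through the auxiliary function $h := \chi \circ f$, using \cref{psi} for one half and a combinatorial identification of Reeb graphs for the other. First I would observe that $h = \chi \circ f$ is a PL function $X \to \R$, being a composition of PL maps, and that $\chi\colon \im f \to \im h$ is PL with $h = \chi \circ f$ by construction. Hence \cref{psi}, applied with $\xi = \chi$, immediately yields a Reeb quotient map $X/{\sim}_f \to X/{\sim}_h =: R_h$. This already disposes of the ``reparametrization'' part of the hypothesis, leaving only the discrepancy between $h$ and $g$, which agree merely on vertices.

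It then remains to produce a Reeb quotient map $R_h \to X/{\sim}_g$; in fact I expect an \emph{isomorphism} $R_h \cong X/{\sim}_g$, after which the composite is a Reeb quotient map by \cref{composition} (an isomorphism, being a PL homeomorphism, is surjective and PL with singleton, hence connected, fibers, so is itself a Reeb quotient map). The point is that $h$ and $g$ are two PL functions on $|K|$ agreeing on the vertex set $V$, since $h(v) = \chi(f(v)) = g(v)$ by hypothesis. Moreover, on every simplex $\sigma$ of $K$ both functions have \emph{connected} level sets with the \emph{same} value range: $g$ is affine on $\sigma$, so $g^{-1}(c)\cap\sigma$ is a convex slice, while $h^{-1}(c)\cap\sigma = f^{-1}(\chi^{-1}(c))\cap\sigma$ is the intersection of $\sigma$ with the $f$-preimage of an interval (as $\chi$ is weakly order preserving), hence again convex; and since $\chi$ is monotone, $h(\sigma) = \chi(f(\sigma))$ equals the interval spanned by the vertex values $g(v)$, $v\in\sigma$, which is also $g(\sigma)$.

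Given this, I would argue that the Reeb graph of such a function depends only on the combinatorial data consisting of $K$ together with the vertex values. Concretely, for each value $c$ the connected components of a level set are obtained by gluing the (connected) slices on the individual simplices across shared faces, and two slices in $\sigma,\sigma'$ meet precisely when the level set is nonempty on the common face $\sigma\cap\sigma'$, i.e.\ when $c$ lies in the vertex-value range of that face. Since this gluing rule, and the way it varies with $c$, is identical for $h$ and $g$, the components of $h^{-1}(c)$ and of $g^{-1}(c)$ are in a natural bijection that is compatible across values, giving a value-preserving PL homeomorphism $R_h \cong X/{\sim}_g$. Composing with the map from the first paragraph produces the desired Reeb quotient map $X/{\sim}_f \to X/{\sim}_g$.

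The main obstacle is exactly this last identification: turning the informal statement ``the Reeb graph sees only vertex values and simplexwise connectivity'' into a rigorous isomorphism. The delicate points are that $h$ is \emph{not} simplexwise linear on $K$ (only on a subdivision adapted to the breakpoints of $\chi$), so one cannot simply invoke equality of two simplexwise linear functions with equal vertex values; and that the bijection between level-set components must be shown to assemble into a continuous PL map as $c$ varies, rather than being defined separately value by value. Everything else---PL\mbox{-}ness, surjectivity, and connectedness of the fibers---then follows formally from \cref{psi,composition}.
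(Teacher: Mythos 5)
Your first step coincides with the paper's: setting $h=\chi\circ f$ and applying \cref{psi} with $\xi=\chi$ yields the Reeb quotient map $X/{\sim}_f\to X/{\sim}_h$. Your supporting observations for the second step are also the right ones: $h(\sigma)=g(\sigma)$ for every simplex $\sigma$ (this is the paper's \cref{h-g}), the simplexwise slices $h^{-1}(c)\cap\sigma$ and $g^{-1}(c)\cap\sigma$ are convex, and the two level sets therefore admit finite closed covers by connected sets with isomorphic nerves --- exactly the comparison the paper makes in \cref{k-connected-fibers}. Even your stronger claim that $X/{\sim}_h\cong X/{\sim}_g$ is true: the map $R_h\to R_g$ that the paper produces is a value-preserving PL bijection, because each of its fibers is a connected subset of a discrete fiber of $\tilde h$, hence a singleton.

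The genuine gap is the one you flag yourself: passing from a bijection, for each fixed value $c$, between the components of $h^{-1}(c)$ and those of $g^{-1}(c)$, to an actual continuous (let alone PL) map between the Reeb graphs. A family of set bijections indexed by $c$, however natural, carries no topology; components merge and split as $c$ crosses critical values, and nothing in your argument controls how the bijections at nearby levels fit together. This is not a routine verification to be deferred --- it is where essentially all the work in the paper's proof lies. The paper resolves it by working on the domain rather than level by level: it defines the relation $k=q_h\circ\big((h^{-1}\circ g)\cap\openstar_K\big)$ on $X\times R_h$, sending $x$ to the $R_h$-class of a point of $h^{-1}(g(x))$ lying in the open star of $x$; the open-star localization is what makes $k$ single-valued, total, surjective and continuous (\cref{k-continuous-surjection}), the nerve comparison you sketch is then used only to show that the fibers of $k$ are connected (\cref{k-connected-fibers}), and a second application of \cref{psi} (with $\xi=\id$, comparing the two factorizations $g=\tilde g\circ q_g=\tilde h\circ k$) converts $k$ into the desired Reeb quotient map $R_h\to R_g$. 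Note that since $R_g$ carries the quotient topology of $q_g$, any rigorous version of your ``bijection compatible across values'' must produce precisely such a lift $X\to R_h$ of $g$, so this construction cannot be bypassed; as written, your argument establishes only the first half of the lemma.
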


\begin{proof}
For simplicity, we write $R_f=X/{\sim}_f$, $R_g=X/{\sim}_g$, and $R_h=X/{\sim}_h$, where $h=\chi\circ f$.
Applying \cref{Reeb_quotient}, $f$ can be factorized as $f=\tilde f\circ q_f$, where $q_f: X\to R_f$ is the canonical projection and $\tilde f: R_f \to \R$ is a Reeb function. Analogously, we obtain $g=\tilde g\circ q_g$ and $h=\tilde h\circ q_h$.
We show that there is a Reeb quotient map $k:X\to R_h$ making the following diagram commute:
\[
\begin{tikzcd}[column sep={12.5mm,between origins},row sep=5.5mm]
\im f \ar[rr, two heads, "\chi"] && \im g \\
R_f \ar[u,"\tilde f"] & R_h \ar[ur,swap,"\tilde h"] & R_g \ar[u,"\tilde g",swap] \\
X \ar[u,two heads,"q_f"] \ar[ur,two heads,swap,"q_h"] && X\ar[ul,two heads,dotted,"k"] \ar[u,two heads,swap,"q_g"]
\end{tikzcd}
\]
The claim then follows by applying \cref{psi} to obtain Reeb quotient maps $R_f \to R_h$ and $R_h \to R_g$, which compose to the desired map $R_f \to R_g$.

In order to prove the existence of such a Reeb quotient map $k$, we define the relation 
\[k=q_h\circ((h^{-1}\circ g) \cap \openstar_K)\] on $X\times R_h$. 
Here $\openstar_K$ denotes the open star on $X = |K|$, defined as
\[\openstar_K(x) = \{ y \in X \mid \sigma \in K, y \in \sigma^\circ, x \in \sigma\}.\]
Note that the converse relation to the open star is the \emph{(closed) carrier}, $\openstar_K^{-1}=\carrier_K$, where $\carrier_K(A)$ is the underlying space of the smallest subcomplex of $K$ containing $A \subseteq X$. We will also use the \emph{open carrier} relation $\carrier_K^\circ$, where $\carrier_K^\circ(A)$ is the smallest union of open simplices of $K$ covering $A$. Note that the open carrier relation is symmetric, i.e., $(\carrier_K^\circ)^{-1}=\carrier_K^\circ$. Moreover, we have $\carrier_K^\circ \subseteq \openstar_K$.

The remainder of the proof is split into several lemmas. \Cref{h-g} describes the behaviour of the functions $h$ and $g$ on the simplices of $K$. \Cref{k-continuous-surjection} shows that $k$ is a continuous surjection, and \cref{k-connected-fibers} shows that $k$ has connected fibers.  Since $\tilde{h}\circ k = g$, we conclude that $k$ is PL. 
Thus, $k$ is a Reeb quotient map, and the claim follows from \cref{psi}.
\end{proof}

\begin{lemma}\label{h-g}
For every simplex $\sigma$ in $K$, $g(\sigma)=h(\sigma)$ and $g(\sigma^\circ) \subseteq h(\sigma^\circ)$.
\end{lemma}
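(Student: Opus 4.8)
The plan is to reduce the statement to the behaviour of affine functions and of the monotone map $\chi$ on a single simplex. Fix a simplex $\sigma$ of $K$ with vertices $v_0,\dots,v_k$, and set $\alpha=\min_i f(v_i)$ and $\beta=\max_i f(v_i)$. Two elementary observations drive the whole argument. First, since $f$ and $g$ are simplexwise linear, their restrictions to $\sigma$ are affine, so $g(\sigma)=[\min_i g(v_i),\max_i g(v_i)]$ and $f(\sigma)=[\alpha,\beta]$; moreover the image of the open simplex is the relative interior of the image interval, i.e.\ $g(\sigma^\circ)$ is the open interval $(\min_i g(v_i),\max_i g(v_i))$ when these values differ and a single point otherwise, and similarly $f(\sigma^\circ)$ is $(\alpha,\beta)$ or a point. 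Second, because $\chi$ is continuous and weakly order preserving, it sends the minimum (resp.\ maximum) of a finite set to the minimum (resp.\ maximum) of the image set; combined with the hypothesis $\chi(f(v_i))=h(v_i)=g(v_i)$, this gives $\chi(\alpha)=\min_i g(v_i)$ and $\chi(\beta)=\max_i g(v_i)$.

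For the equality $g(\sigma)=h(\sigma)$ I would simply compute $h(\sigma)=\chi(f(\sigma))=\chi([\alpha,\beta])$. A continuous weakly order preserving map carries the closed interval $[\alpha,\beta]$ onto $[\chi(\alpha),\chi(\beta)]$, which by the observation above equals $[\min_i g(v_i),\max_i g(v_i)]=g(\sigma)$.

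For the inclusion $g(\sigma^\circ)\subseteq h(\sigma^\circ)$ I would split into two cases. If $\chi(\alpha)=\chi(\beta)$ then $g$ is constant on $\sigma$ (all vertex values coincide) and $\chi$ is constant on $[\alpha,\beta]$, so both $g(\sigma^\circ)$ and $h(\sigma^\circ)=\chi(f(\sigma^\circ))$ collapse to the single point $\{\chi(\alpha)\}$, and the inclusion holds trivially. If $\chi(\alpha)<\chi(\beta)$ then in particular $\alpha<\beta$, so $f(\sigma^\circ)=(\alpha,\beta)$ and $h(\sigma^\circ)=\chi((\alpha,\beta))$, while $g(\sigma^\circ)=(\chi(\alpha),\chi(\beta))$. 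The key point is that every $y$ strictly between $\chi(\alpha)$ and $\chi(\beta)$ has, by the intermediate value theorem, a preimage under $\chi$ lying in the open interval $(\alpha,\beta)$ (it can be neither $\alpha$ nor $\beta$, since $\chi(\alpha)<y<\chi(\beta)$). Hence $(\chi(\alpha),\chi(\beta))\subseteq\chi((\alpha,\beta))$, which is exactly $g(\sigma^\circ)\subseteq h(\sigma^\circ)$.

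The one subtlety worth flagging is that $\chi$ is only \emph{weakly} order preserving, so $\chi((\alpha,\beta))$ may properly contain the open interval $(\chi(\alpha),\chi(\beta))$ whenever $\chi$ is locally constant near an endpoint; this is precisely why the second assertion is an inclusion and not an equality, and the intermediate value theorem guarantees the inclusion runs in the right direction. The only genuine care needed is to separate out the degenerate cases in which $f$, and hence $g$, is constant on $\sigma$, so that ``relative interior'' is read as the point itself.
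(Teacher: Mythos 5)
Your proof is correct and follows essentially the same route as the paper's: both reduce the claim to the vertex values via the affineness of $f$ and $g$ on $\sigma$, use that the weakly order preserving $\chi$ carries the endpoints of $f(\sigma)$ to those of $g(\sigma)$, and split into the constant and nonconstant cases. The only divergence is the final step: where you apply the intermediate value theorem to $\chi$ to produce, for each point of $(\chi(\alpha),\chi(\beta))$, a preimage inside $(\alpha,\beta)$, the paper instead observes that $h(\sigma^\circ)\subseteq h(\sigma)\subseteq \overline{h(\sigma^\circ)}$ are nested intervals and concludes $h(\sigma)^\circ\subseteq h(\sigma^\circ)$ without referring back to $\chi$ at all.
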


\begin{proof}
We have $h(\sigma)=g(\sigma)$ because $h$ is equal to $g$ on the vertices of $K$, and $h=\chi\circ f$ with $f$ linear on $\sigma$ and $\chi$ a weakly order preserving surjection.

To show that $g(\sigma^\circ) \subseteq h(\sigma^\circ)$, note that since $g$ is linear on $\sigma$, either $g$ is constant on $\sigma$ and so $g(\sigma^\circ) = g(\sigma) = h(\sigma)$, or $g(\sigma^\circ) = (g(v),g(w))$ for some vertices $v,w$ of $\sigma$.
In the latter case, since $h$ and $g$ coincide on the vertices,
we have $g(\sigma^\circ) = g(\sigma)^\circ = h(\sigma)^\circ$.
Finally, since $h(\sigma^\circ) \subseteq h(\sigma) \subseteq \overline{h(\sigma^\circ)}$ are nested intervals, we have 
$h(\sigma)^\circ \subseteq h(\sigma^\circ)$
and the claim follows.
\end{proof}

\begin{lemma}
\label{k-continuous-surjection}
$k$ is a continuous surjection.
\end{lemma}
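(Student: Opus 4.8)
The plan is to establish in turn that the relation $k=q_h\circ((h^{-1}\circ g)\cap\openstar_K)$ is a total, single-valued map, that it is continuous, and that it is surjective. Unwinding the definition, a pair $(x,z)\in X\times R_h$ lies in $k$ exactly when there is a point $x'$ with $h(x')=g(x)$, with $x'$ in the open star of $x$ (that is, $x'\in\sigma^\circ$ for some $\sigma\ni x$), and with $q_h(x')=z$. The single fact that drives the whole argument is that the level sets of $h$ restricted to any simplex are connected: since $h=\chi\circ f$ with $f$ linear on $\sigma$ and $\chi$ weakly order preserving, for any value $c$ the set $h^{-1}(c)\cap\sigma=(f|_\sigma)^{-1}\big(\chi^{-1}(c)\cap f(\sigma)\big)$ is the preimage under an affine map of a subinterval of $f(\sigma)$, hence convex and connected.

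First I would show $k$ is a well-defined map. For totality, given $x$ set $\sigma=\carrier_K(x)$, so $x\in\sigma^\circ$; by \cref{h-g} we have $g(x)\in g(\sigma^\circ)\subseteq h(\sigma^\circ)$, so some $x'\in\sigma^\circ$ satisfies $h(x')=g(x)$ and $(x,q_h(x'))\in k$. For single-valuedness, suppose $x'_1\in\sigma_1^\circ$ and $x'_2\in\sigma_2^\circ$ both witness membership, with $x\in\sigma_1\cap\sigma_2$ and $h(x'_1)=h(x'_2)=g(x)=:c$. Writing $\tau=\carrier_K(x)\subseteq\sigma_1,\sigma_2$ and choosing some $x'_0\in\tau^\circ$ with $h(x'_0)=c$, the points $x'_1,x'_0$ both lie in the connected set $h^{-1}(c)\cap\sigma_1$ and $x'_2,x'_0$ both lie in $h^{-1}(c)\cap\sigma_2$; hence $x'_1\sim_h x'_0\sim_h x'_2$, so $q_h(x'_1)=q_h(x'_2)$. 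Thus $k$ is a map, with $k(x)=q_h(x')$ for any $x'\in\carrier_K(x)^\circ$ with $h(x')=g(x)$.

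For continuity I would argue simplex by simplex and glue over a finite closed cover. Fix a simplex $\sigma$. Since level sets of $h|_\sigma$ are connected, $h|_\sigma:\sigma\to h(\sigma)$ is a quotient map (a surjection of compact Hausdorff spaces) whose fibers are exactly those level sets. The restriction $q_h|_\sigma:\sigma\to R_h$ is continuous and constant on these fibers, so by the universal property of the quotient it factors as $q_h|_\sigma=\lambda_\sigma\circ h|_\sigma$ for a continuous $\lambda_\sigma:h(\sigma)\to R_h$. For $x\in\sigma$, picking $x'\in\carrier_K(x)^\circ\subseteq\sigma$ with $h(x')=g(x)$ gives $k(x)=q_h(x')=\lambda_\sigma(h(x'))=\lambda_\sigma(g(x))$, so $k|_\sigma=\lambda_\sigma\circ g|_\sigma$ is continuous. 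As the closed simplices form a finite closed cover of $X$ on which $k$ agrees with these continuous maps, $k$ is continuous by the gluing lemma.

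The surjectivity argument is where I expect the real work. Given $z\in R_h$ with $\tilde h(z)=c$, let $C$ be the corresponding connected component of $h^{-1}(c)$ and choose a simplex $\sigma$ of \emph{minimal dimension} with $\sigma^\circ\cap C\neq\emptyset$, together with $x'_0\in\sigma^\circ\cap C$. I claim $c\in g(\sigma^\circ)$; granting this, any $x\in\sigma^\circ$ with $g(x)=c$ has $\carrier_K(x)=\sigma$ and $h(x'_0)=c=g(x)$, whence $k(x)=q_h(x'_0)=z$. If $h$ is constant on $\sigma$ then so is $g$ (by \cref{h-g}, since $g(\sigma)=h(\sigma)$ is a point), and the claim is immediate. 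Otherwise $h(\sigma)=[a,b]$ with $a<b$, $g$ is non-constant, and by \cref{h-g} $g(\sigma^\circ)=(a,b)$, so it remains to exclude $c\in\{a,b\}$. If, say, $c=a=\min_\sigma h$, then $h^{-1}(c)\cap\sigma$ is a convex set containing the interior point $x'_0$; since a non-constant linear $f$ attains its minimum on $\sigma$ only along a proper face $\tau$ (with $\dim\tau<\dim\sigma$), convexity forces this set to contain $\tau^\circ$, and as it lies in the single component $C$ we get $\tau^\circ\cap C\neq\emptyset$, contradicting minimality of $\dim\sigma$. Hence $c\in(a,b)=g(\sigma^\circ)$. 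The main obstacle is precisely this last point: because $\chi$ may be locally constant, $c$ can be realized on $\sigma^\circ$ while still being an extreme value of $h$ on $\sigma$, and the minimal-dimension choice is exactly what pushes $c$ into the open interval $g(\sigma^\circ)$.
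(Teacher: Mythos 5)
Your proof is correct, and it departs from the paper's in two of its three parts. The well-definedness argument (totality and single-valuedness) is essentially the paper's own: both rest on \cref{h-g} together with the fact that $h^{-1}(t)\cap\sigma$ is convex, hence connected, for every simplex $\sigma$. For continuity, the paper instead verifies that preimages of closed sets are closed, using the decomposition $k^{-1}=({\carrier_K}\circ q_h^{-1})\cap(g^{-1}\circ \tilde h)$, closedness of subcomplexes, and the closed map lemma; you factor $k|_\sigma=\lambda_\sigma\circ g|_\sigma$ through the quotient $h|_\sigma\colon \sigma\to h(\sigma)$ and glue over the finite closed cover by simplices. Your route is more self-contained, needing only the universal property of quotient maps and avoiding the relational bookkeeping. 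For surjectivity, the paper is much shorter than you are: given $\xi\in q_h^{-1}(y)$ with $\xi\in\sigma^\circ$, it takes any $x\in\sigma$ (boundary points allowed) with $g(x)=\tilde h(y)$, which exists since $g(\sigma)=h(\sigma)$ by \cref{h-g}; then $\xi\in\sigma^\circ\subseteq\openstar_K(x)$ is already a valid witness, so $k(x)=q_h(\xi)=y$. Your minimal-dimension simplex and the exclusion of the extreme values $c\in\{a,b\}$ are needed only because you insist that $x$ lie in an \emph{open} simplex, so that the witness sits in $\carrier_K(x)^\circ$; since your own single-valuedness argument already covers witnesses anywhere in $\openstar_K(x)$, the paper's choice of $x$ renders that extra machinery unnecessary. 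Finally, a phrasing quibble in your extremal case: ``convexity forces this set to contain $\tau^\circ$'' is not the actual reason --- the face $\tau$ on which $f$ attains its minimum lies in $h^{-1}(c)\cap\sigma$ because $\chi$ is weakly order preserving, so $h\equiv\chi(\min_\sigma f)=\min_\sigma h=c$ on $\tau$; convexity is what you then need in order to conclude $h^{-1}(c)\cap\sigma\subseteq C$ and hence $\tau^\circ\cap C\neq\emptyset$. All the ingredients are present in your write-up, so this is a presentational slip rather than a gap.
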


\begin{proof}
We fist show that $k$ is right-unique, i.e., for any $x \in X$ and $y,y' \in k(x)$, we have $y=y'$.
To see this, let $t=g(x)$ and note that $\tilde h(y)=\tilde h(y')=t$.
Let $\sigma\in K$ be such that $x \in \sigma^\circ$.
By \cref{h-g} there is a point $\zeta \in \sigma^\circ$ with $h(\zeta)=g(x)=t$; in particular, $\zeta \in h^{-1}(t) \cap \openstar_K(x)$.
Furthermore, there are points $\xi,\xi'\in h^{-1}(t) \cap \openstar_K(x)$ with $\xi \in q_h^{-1}(y)$ and $\xi'\in q_h^{-1}(y')$. 
But since $h^{-1}(t) \cap \tau$ is necessarily connected for every simplex $\tau$, 
we know that $\zeta$ lies in the same connected component of $h^{-1}(t) \cap \openstar_K(x)$ as both $\xi$ and $\xi'$, and so
we have $y=q_h(\xi)=q_h(\xi')=y'$
as claimed.

To show that $k$ is left-total, 
we need to show that for every $x\in X$, $k(x)\ne \emptyset$. It suffices to show that 
for every $x\in X$, 
$\openstar_K(x)$ contains a point $x'$ with $h(x')=g(x)$. 
This follows by considering the simplex $\sigma\in K$ with $x \in \sigma^\circ$. Now by \cref{h-g}, there is a point $x' \in \sigma^\circ \subseteq \openstar_K(x)$ with $h(x')=g(x)$ as claimed.

To show that $k$ is right-total, we show that for every $y \in R_h$, there is some \[x\in k^{-1}(y) = ({\carrier_K} \circ q_h^{-1})(y)\cap(g^{-1}\circ \tilde h)(y),\]
or equivalently, there is some $x\in {\carrier_K} \circ q_h^{-1}(y)$ such that $g(x)= \tilde h(y)$.
If $q_h^{-1}(y)$ contains some vertex $v$ of $K$, choose $x=v$. Otherwise,
let $\xi\in q_h^{-1}(y)$, and let $\sigma\in K$ be such that $\xi \in \sigma^\circ$. 
Now by \cref{h-g} there is a point $x \in \sigma \subseteq {\carrier_K} \circ q_h^{-1}(y)$ with $g(x) = h(\xi) = \tilde h(y)$.

Finally, to show that $k$ is continuous, we show that for every closed subset $L$ of $R_h$, the preimage $k^{-1}(L)$ is closed.
Since $k^{-1}=({\carrier_K}\circ q_h^{-1})\cap (g^{-1}\circ \tilde h)$, it is sufficient to show that both ${\carrier_K}\circ q_h^{-1}(L)$ and $g^{-1}\circ \tilde h(L)$ are closed in $X$.
First note that ${\carrier_K}\circ q_h^{-1}(L)$ is closed as a subcomplex of $K$.
Furthermore, the image $\tilde h(L)$ is closed by the closed map lemma. By continuity of $g$ it follows that $g^{-1}\circ \tilde h(L)$ is closed in $X$.
\end{proof}

\begin{lemma}
\label{k-connected-fibers}
The fibers of $k$ are connected.
\end{lemma}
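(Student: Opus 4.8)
The plan is to compute the fiber $k^{-1}(y)$ explicitly and prove it is connected by comparing the level sets of $g$ and $h$ simplex by simplex. Fix $y \in R_h$ and set $t = \tilde h(y)$ and $C = q_h^{-1}(y)$, so that $C$ is a connected component of $h^{-1}(t)$. As recorded in the proof of \cref{k-continuous-surjection}, the fiber is $k^{-1}(y) = \carrier_K(C) \cap g^{-1}(t)$. Write $\overline{C} = \carrier_K(C)$; this is a finite subcomplex of $K$, namely the union of the closed simplices $\carrier_K(x)$ over $x \in C$, so that its maximal simplices are the carriers $\carrier_K(x)$ of points $x \in C$. The goal is to prove that $g^{-1}(t) \cap \overline{C}$ is connected.

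The first step is to collect the local behaviour on each closed simplex $\sigma$ of $K$. Since $g$ is linear on $\sigma$, the set $g^{-1}(t) \cap \sigma$ is convex, hence closed and connected; and as in the proof of \cref{k-continuous-surjection}, $h^{-1}(t) \cap \sigma$ is closed and connected as well (using that $f$ is linear on $\sigma$ and $\chi$ is weakly order preserving). By \cref{h-g} we have $g(\sigma) = h(\sigma)$, so $g^{-1}(t) \cap \sigma \neq \emptyset$ if and only if $t \in g(\sigma) = h(\sigma)$ if and only if $h^{-1}(t) \cap \sigma \neq \emptyset$. Moreover, since $K$ is a simplicial complex, the intersection of two closed simplices $\sigma, \sigma'$ is a common face $\rho$ (possibly empty), so $(g^{-1}(t) \cap \sigma) \cap (g^{-1}(t) \cap \sigma') = g^{-1}(t) \cap \rho$, which is nonempty exactly when $t \in g(\rho) = h(\rho)$; the very same condition governs the corresponding $h$-level sets. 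Thus the $g$- and $h$-level sets meet the simplices of $K$, and overlap across shared faces, according to one and the same combinatorial pattern.

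I also claim that $h^{-1}(t) \cap \overline{C} = C$. The inclusion $\supseteq$ is clear. Conversely, if $z \in h^{-1}(t) \cap \overline{C}$, then $z$ lies in a closed simplex of $\overline{C}$, which by the description of $\carrier_K(C)$ is a face of $\carrier_K(x)$ for some $x \in C$; since $h^{-1}(t)$ meets the closed simplex $\carrier_K(x)$ in a connected set containing both $z$ and $x \in C$, we conclude $z \in C$.

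With these ingredients the conclusion follows from an intersection-graph argument. Index by the simplices $\sigma \in \overline{C}$ with $t \in g(\sigma)$ the two families $P_\sigma = g^{-1}(t) \cap \sigma$ and $Q_\sigma = h^{-1}(t) \cap \sigma$; each $P_\sigma$ and $Q_\sigma$ is a nonempty closed connected set, and $\bigcup_\sigma P_\sigma = g^{-1}(t) \cap \overline{C} = k^{-1}(y)$ while $\bigcup_\sigma Q_\sigma = h^{-1}(t) \cap \overline{C} = C$. By the first step the two families have the same intersection pattern, that is, $P_\sigma \cap P_{\sigma'} \neq \emptyset$ if and only if $Q_\sigma \cap Q_{\sigma'} \neq \emptyset$. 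Form the graph on these simplices with an edge whenever the corresponding pieces meet. Since $\bigcup_\sigma Q_\sigma = C$ is connected and is a finite union of closed connected sets, this graph is connected, because a disconnection of the graph would split $C$ into two disjoint nonempty closed sets. As $\bigcup_\sigma P_\sigma$ is a finite union of closed connected sets along the same connected graph, it too is connected, which is exactly the assertion that $k^{-1}(y)$ is connected. The main obstacle is the first step: matching the combinatorial intersection patterns of the $g$- and $h$-level sets, which hinges on the simplexwise agreement $g(\rho) = h(\rho)$ from \cref{h-g} together with the simplexwise connectivity of the level sets. Once these are in place, the identity $h^{-1}(t) \cap \overline{C} = C$ transports the connectedness of $C$ over to $g^{-1}(t) \cap \overline{C}$.
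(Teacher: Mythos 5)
Your proof is correct and takes essentially the same route as the paper: both cover $C$ and $k^{-1}(y)$ by their intersections with the simplices of $\carrier_K(C)$, use \cref{h-g} (via $g(\rho)=h(\rho)$ on common faces) to show the two covers have identical intersection patterns, and transfer connectedness from $C$ to $k^{-1}(y)$ — your intersection graph is precisely the $1$-skeleton of the nerve used in the paper. A minor bonus: you explicitly verify the identity $h^{-1}(t)\cap\carrier_K(C)=C$, which the paper asserts without proof.
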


\begin{proof}
Let $y \in R_h$ be a point in the Reeb graph with value $t = \tilde h(y)$, and $C = q_h^{-1}(y) \subseteq h^{-1}(t)$ the corresponding component of the level set of $h$.
Let $U=\carrier_K (C)$, and let $L$ be the corresponding subcomplex of $K$. Writing $D=k^{-1}(y)$, we have $C = U \cap h^{-1}(t)$ and $D = U \cap g^{-1}(t)$.
To prove that $D$ is connected, it is sufficient to show that $C$ and $D$ have finite closed covers with isomorphic nerves; since $C$ is connected, both nerves and hence also $D$ are then connected too.

The cover of $C$ is given by $\{\sigma \cap C \mid \sigma \in L\}$, and similarly the cover of $D$ is $\{\sigma \cap D \mid \sigma \in L\}$.
Observe that any two cover elements of $C$, say $\sigma \cap C$ and $\tau \cap C$, have a nonempty intersection $(\sigma \cap C)\cap (\tau \cap C) = (\sigma \cap \tau) \cap C$ if and only if $t \in h(\sigma \cap \tau)$. Similarly, $\sigma \cap D$ and $\tau \cap D$ have nonempty intersection if and only if $t \in g(\sigma \cap \tau)$. But $g(\sigma \cap \tau)=h(\sigma \cap \tau)$ by \cref{h-g}, and so the nerves of both covers are isomorphic as claimed.
\end{proof}

We thus have shown the existence of the Reeb quotient map $k$. This completes the proof of \cref{phi}. We will now apply \cref{phi} to construct Reeb graph edit zigzags from straight line homotopies.

\begin{lemma}\label{lambda_i}
Let $X=|K|$ be a compact triangulable space, with PL functions $f, g : X\to \R$, simplexwise linear on $K$. Consider the straight line homotopy ${f}_\lambda=(1-\lambda) f+\lambda g$, with $0\le \lambda\le 1$. Then there exists a partition
$0=\lambda_1<\cdots <\lambda_n= 1$ such that
for every $1 \leq i < n$ and $\rho\in (\lambda_i,\lambda_{i+1})$, there exist weakly order preserving PL surjections $\chi_i:\im {f}_{\rho} \to \im f_{\lambda_i}$ and $\xi_{i+1}:\im {f}_{\rho} \to \im f_{\lambda_{i+1}}$ with \[\chi_i\circ{f}_{\rho}(v)=f_{\lambda_i}(v) ~~ \text{and} ~~ \xi_{i+1}\circ{ f}_{\rho}(v)=f_{\lambda_{i+1}}(v)\] for every vertex $v$ in $K$.
\end{lemma}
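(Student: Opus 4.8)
The plan is to analyze how the vertex values $f_\lambda(v) = (1-\lambda)f(v) + \lambda g(v)$ evolve as $\lambda$ increases from $0$ to $1$. For each vertex $v$, the value $f_\lambda(v)$ traces out an affine (hence monotone) function of $\lambda$. The images $\im f_\lambda \subseteq \R$ are intervals whose combinatorial structure — specifically, the relative ordering of the vertex values $\{f_\lambda(v) : v \in V\}$ — changes only at finitely many values of $\lambda$, namely those where two vertex values coincide, i.e., where $f_\lambda(v) = f_\lambda(w)$ for some pair $v,w$. First I would take the partition $0 = \lambda_1 < \cdots < \lambda_n = 1$ to be exactly these \emph{crossing} parameters (together with the endpoints $0$ and $1$), so that on each open interval $(\lambda_i, \lambda_{i+1})$ the linear order on the vertex values is constant.

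\textbf{Constructing the surjections.}
Fix $i$ and $\rho \in (\lambda_i, \lambda_{i+1})$. The key point is that $\chi_i$ must send $f_\rho(v)$ to $f_{\lambda_i}(v)$ consistently for all vertices $v$, and be weakly order preserving and PL. Since no crossing occurs strictly between $\lambda_i$ and $\rho$, the weak order on $\{f_\rho(v)\}$ refines (in fact agrees with, up to ties) the weak order on $\{f_{\lambda_i}(v)\}$: if $f_\rho(v) < f_\rho(w)$ then necessarily $f_{\lambda_i}(v) \le f_{\lambda_i}(w)$, because an affine function of $\lambda$ that is strictly ordered at $\rho$ cannot reverse strict order at the nearer endpoint $\lambda_i$ without a crossing in between. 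This monotonicity compatibility is what lets me define $\chi_i$ on the sorted vertex values and extend it \emph{piecewise linearly} across the intervals of $\im f_\rho$ between consecutive distinct vertex values, obtaining a well-defined weakly order preserving PL surjection onto $\im f_{\lambda_i}$ with $\chi_i \circ f_\rho(v) = f_{\lambda_i}(v)$. The construction of $\xi_{i+1}$ onto $\im f_{\lambda_{i+1}}$ is entirely symmetric, using the endpoint $\lambda_{i+1}$ instead.

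\textbf{Main obstacle.}
The subtle point — and the step I expect to require the most care — is verifying that the piecewise-linear map $\chi_i$ is genuinely \emph{well-defined} and \emph{weakly order preserving} when several vertices collide at an endpoint. Precisely, two vertices $v,w$ may satisfy $f_\rho(v) \ne f_\rho(w)$ yet $f_{\lambda_i}(v) = f_{\lambda_i}(w)$ (a collapse at the endpoint), which forces $\chi_i$ to map a nondegenerate subinterval of $\im f_\rho$ to a single point; conversely, a pair with $f_\rho(v) = f_\rho(w)$ imposes the constraint $f_{\lambda_i}(v) = f_{\lambda_i}(w)$, which holds precisely because of how the partition was chosen. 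I would argue that, because each $f_\lambda(v)$ is affine in $\lambda$ and no crossing lies strictly inside $(\lambda_i,\rho]$, the endpoint values at $\lambda_i$ respect the weak order at $\rho$; hence assigning $\chi_i$ to interpolate linearly between consecutive \emph{distinct} values of $f_\rho$ produces a monotone (weakly order preserving), continuous, surjective PL map. The image is exactly $\im f_{\lambda_i}$ since the extreme vertex values map to the endpoints of that interval. Care is needed only in bookkeeping the ties, but no genuine conflict arises precisely because the partition isolates all order-changing parameters.
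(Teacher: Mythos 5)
Your proposal is correct and takes essentially the same approach as the paper: choose the partition points to be the (transversal) crossing values of the affine vertex paths $\lambda \mapsto f_\lambda(v)$, observe that the vertex order is constant on each open subinterval and weakly preserved at its endpoints, and extend the vertex-value assignment $f_\rho(v)\mapsto f_{\lambda_i}(v)$ piecewise linearly to a weakly order preserving PL surjection. One small point of precision: the crossing set must exclude vertex pairs whose affine paths coincide identically (otherwise it need not be finite) --- exactly the qualification the paper builds into its definition (``$f_\rho(v)\neq f_\rho(w)$ for every $\rho\neq\lambda$'') --- and your later handling of persistent ties shows you implicitly assume this.
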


\begin{proof}
Consider the set of values $0 < \lambda < 1$ such that there exist vertices $v,w\in K$ with
\begin{equation*}\label{lambda}
{ f}_{\lambda}(v)={f}_{\lambda}(w), ~~\text{but}~~ { f}_{\rho}(v)\neq{ f}_{\rho}(w) ~~ \text{for every}~~ \rho\neq\lambda.
\end{equation*}
This set is finite because the function $\lambda \mapsto f_\lambda(v)-f_\lambda(w)$ is linear and $K$ has a finite number of vertices. Let $\{\lambda_i\}_{1\leq i\leq n}$ be this set together with $0$ and $1$, indexed in ascending order. By the linearity of ${ f}_\lambda$ with respect to the parameter $\lambda$, we also see that the order induced by $f_\rho$ on the vertices is the same for every $\rho\in (\lambda_i,\lambda_{i+1})$. Indeed, if there exist two distinct vertices $v,w$ of $K$ such that ${f}_{\rho}(v)={f}_{\rho}(w)$ for some $\rho\in(\lambda_i,\lambda_{i+1})$, then ${ f}_{\lambda}(v)={ f}_{\lambda}(w)$ for every $\lambda\in[0,1]$.
By continuity, the order is still weakly preserved along $[\lambda_i,\lambda_{i+1}]$.

Therefore, the function $f_\rho(v) \mapsto { f}_{\lambda_i}(v)$  is well-defined and can be extended to a 
piecewise linear function $\chi_i$ satisfying the claim. The function $\xi_{i+1}$ can be defined similarly.
\end{proof}

\begin{theorem}\label{stabilityGraph}
$\deltaEGraph$ is a stable distance.
\end{theorem}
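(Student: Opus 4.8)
The plan is to reduce stability for $\deltaEGraph$ to exhibiting, for any cone $p_f\colon R_f\leftarrow X\rightarrow R_g\colon p_g$, a single \emph{admissible} Reeb zigzag of finite graphs whose cost is at most $\delta:=\|\tilde f\circ p_f-\tilde g\circ p_g\|_\infty$. Write $F=\tilde f\circ p_f$ and $G=\tilde g\circ p_g$, and fix a triangulation $K$ of $X=|K|$ on which both are simplexwise linear. By \cref{homeoR}, $R_f\cong X/{\sim}_F$ and $R_g\cong X/{\sim}_G$. I would then run the straight-line homotopy $f_\lambda=(1-\lambda)F+\lambda G$ and invoke \cref{lambda_i} to obtain a partition $0=\lambda_1<\dots<\lambda_n=1$ and, for chosen $\rho_i\in(\lambda_i,\lambda_{i+1})$, weakly order preserving PL surjections $\chi_i\colon\im f_{\rho_i}\to\im f_{\lambda_i}$ and $\xi_{i+1}\colon\im f_{\rho_i}\to\im f_{\lambda_{i+1}}$ agreeing with the homotopy on vertices.

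Next I would assemble these into a zigzag. Setting $R_i=X/{\sim}_{f_{\lambda_i}}$ and $X_i=X/{\sim}_{f_{\rho_i}}$ (all finite graphs by \cref{poly}), \cref{phi} applied to $\chi_i$ and to $\xi_{i+1}$ produces Reeb quotient maps $a_i\colon X_i\to R_i$ and $b_i\colon X_i\to R_{i+1}$, so the resulting diagram $Z$ is an admissible Reeb zigzag for $\deltaEGraph$ between $R_1\cong R_f$ and $R_n\cong R_g$. Tracing the construction of \cref{phi} through \cref{psi}, these maps satisfy the function-value identities $\tilde f_{\lambda_i}\circ a_i=\chi_i\circ\tilde f_{\rho_i}$ and $\tilde f_{\lambda_{i+1}}\circ b_i=\xi_{i+1}\circ\tilde f_{\rho_i}$, which I would record as the bridge between combinatorics and function values.

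It remains to bound the cost $c_Z=\|s^L\|_\infty$ on the limit $L$ of $Z$, and this is the crux. The subtle point is that the natural map $X\to L$ need \emph{not} be surjective, so it does not suffice to evaluate the spread only on the image of $X$; one must control $\max_i f_i-\min_j f_j$ at \emph{every} point $z\in L$. I would describe $z$ by its projections $x_i\in X_i$ and the values $t_i=\tilde f_{\rho_i}(x_i)$, so that $f_i(z)=\chi_i(t_i)$ and $f_{i+1}(z)=\xi_{i+1}(t_i)$ by the identities above. Since $\chi_i$ and $\xi_{i+1}$ are PL with the same breakpoints (the vertex values under $f_{\rho_i}$), on the segment $[f_{\rho_i}(v_i),f_{\rho_i}(w_i)]$ containing $t_i$ both are affine with a common parameter $\theta_i$; substituting $f_\lambda(u)=(1-\lambda)F(u)+\lambda G(u)$ I would obtain $f_i(z)=(1-\lambda_i)P_i+\lambda_i Q_i$ and $f_{i+1}(z)=(1-\lambda_{i+1})P_i+\lambda_{i+1}Q_i$, where $P_i$ and $Q_i$ are the $\theta_i$-convex combinations of the values of $F$ and of $G$ at $v_i,w_i$; hence $|P_i-Q_i|\le\delta$.

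Finally I would package these equalities: each affine function $\ell_i(\lambda)=(1-\lambda)P_i+\lambda Q_i$ has slope $|Q_i-P_i|\le\delta$, and the compatibility $\ell_i(\lambda_{i+1})=f_{i+1}(z)=\ell_{i+1}(\lambda_{i+1})$ lets them glue into a continuous, $\delta$-Lipschitz PL function $\ell\colon[0,1]\to\R$ with $\ell(\lambda_i)=f_i(z)$ for all $i$. Therefore $\max_i f_i(z)-\min_j f_j(z)\le\max_{[0,1]}\ell-\min_{[0,1]}\ell\le\delta\cdot(1-0)=\delta$. Taking the supremum over $z\in L$ gives $c_Z\le\delta$, whence $\deltaEGraph(R_f,R_g)\le\|\tilde f\circ p_f-\tilde g\circ p_g\|_\infty$. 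The main obstacle is precisely this third step---bounding the spread over the whole limit $L$ rather than over $X$---which the Lipschitz-reparametrization argument resolves; the bookkeeping via \cref{lambda_i,phi} is routine by comparison.
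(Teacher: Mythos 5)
Your proposal is correct, and its skeleton is the same as the paper's proof: the straight-line homotopy $f_\lambda=(1-\lambda)F+\lambda G$, the partition from \cref{lambda_i}, the intermediate quotients $R_i=X/{\sim}_{f_{\lambda_i}}$ and $X_i=X/{\sim}_{f_{\rho_i}}$ assembled into an admissible zigzag via \cref{phi}, and the reduction of the cost bound to controlling the consecutive differences $|f_{i+1}(z)-f_i(z)|$ at \emph{every} point $z$ of the limit $L$. Where you genuinely differ is in how that key estimate is justified, and your variant is the more careful one. The paper lifts $r_i(\ell)\in X_i$ to a point $x_{\ell,i}\in X$ by surjectivity of $q_{\rho_i}$ and writes $f^L_{\lambda_i}(\ell)=f_{\lambda_i}(x_{\ell,i})$ and $f^L_{\lambda_{i+1}}(\ell)=f_{\lambda_{i+1}}(x_{\ell,i})$; this presupposes strict commutativity $p_i\circ q_{\rho_i}=q_{\lambda_i}$, i.e., the identity $\chi_i\circ f_{\rho_i}=f_{\lambda_i}$ on all of $X$, which \cref{lambda_i} only provides at vertices (away from the vertex set, $\chi_i\circ f_{\rho_i}$ is in general not simplexwise linear, so it cannot agree with $f_{\lambda_i}$; indeed, if such a $\chi_i$ existed globally, \cref{psi} alone would suffice and \cref{phi} would be unnecessary). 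Your route uses only what the construction genuinely provides: the identities $\tilde f_{\lambda_i}\circ a_i=\chi_i\circ\tilde f_{\rho_i}$ and $\tilde f_{\lambda_{i+1}}\circ b_i=\xi_{i+1}\circ\tilde f_{\rho_i}$, which do follow from \cref{psi} as you claim, together with the interpolation structure of $\chi_i,\xi_{i+1}$. The common-parameter computation $f_i(z)=(1-\lambda_i)P_i+\lambda_i Q_i$, $f_{i+1}(z)=(1-\lambda_{i+1})P_i+\lambda_{i+1}Q_i$ with $|P_i-Q_i|\le\delta=\|F-G\|_\infty$ gives $|f_{i+1}(z)-f_i(z)|\le(\lambda_{i+1}-\lambda_i)\delta$ with no commutativity over $X$; in effect your $(P_i,Q_i)$ are the values of $(F,G)$ at a ``virtual'' point interpolating two consecutive vertices, which is exactly what repairs the paper's shortcut. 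Your $\delta$-Lipschitz gluing over $[0,1]$ is then equivalent to the paper's telescoping sum $\sum_i(\lambda_{i+1}-\lambda_i)\delta=\delta$. Two small points to tighten: the affineness-with-common-breakpoints claim requires choosing $\chi_i$ and $\xi_{i+1}$ to be the specific piecewise linear interpolations of the vertex values constructed in the \emph{proof} of \cref{lambda_i} (the statement of that lemma permits other extensions), so make that choice explicit; and the identification $R_f\cong X/{\sim}_F$ should cite the uniqueness remark following \cref{psi} rather than \cref{homeoR}.
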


\begin{proof}
Let $X=|K|$ be a compact triangulable space and $f, g : X\to \R$ be PL functions, simplexwise linear on $K$. Consider the straight line homotopy ${f}_\lambda=(1-\lambda) f+\lambda g$, with $0\le \lambda\le 1$, and take values $\lambda_i\in [0,1]$, $1 \leq i \leq n$, as in \cref{lambda_i}. Set $\rho_i=(\lambda_i+\lambda_{i+1})/2$.

Consider the Reeb cone \eqref{cone}, with $V=X$, $R_i=X/_{\sim {f}_{\lambda_i}}$, $i=1,\ldots,n$, and $X_i=X/_{\sim {f}_{\rho_i}}$, $i=1,\ldots,n-1$.
The canonical projections $q_{\rho_i}: X \to X_i$ and $q_{\lambda_i}: X \to R_i$ are Reeb quotient maps, and the Reeb functions $R_i\to \R$ are induced by ${f}_{\lambda_i}$ as in \cref{Reeb_quotient}. We show that there are Reeb quotient maps $p_i: X/{\sim}_{{ f}_{\rho_i}}\to X/{\sim}_{{f}_{\lambda_i}}$ and $o_{i+1}: X/{\sim}_{{f}_{\rho_i}}\to X/{\sim}_{{f}_{\lambda_{i+1}}}$ that make the following diagram commute:
\[
\begin{tikzcd}[column sep={15ex,between origins},row sep=5.5mm]
 R_i = X/{\sim}_{{f}_{\lambda_i}} & & R_{i+1} = X/{\sim}_{{ f}_{\lambda_{i+1}}}
\\
 & X_i = X/{\sim}_{{f}_{\rho_i}}\ar[ul,dotted,two heads,swap,"p_i"] \ar[ur,dotted,two heads,"o_{i+1}"]&
\\
 & X\ar[u,"q_{\rho_i}",swap] \ar[ulu,"q_{\lambda_i}",bend left] \ar[uru,"q_{\lambda_{i+1}}",swap,bend right] & 
\end{tikzcd}
\]
We prove the existence of $p_i$, that of $o_{i+1}$ being analogous. 
By \cref{{lambda_i}}, there is a weakly order preserving PL surjection $\chi_i: \im f_{\rho_i} \to \im f_{\lambda_i}$ such that $\chi_i\circ f_{\rho_i}=f_{\lambda_i}$.
Hence, \cref{phi} provides the desired Reeb quotient map $p_i: X/{\sim}_{{ f}_{\rho_i}} \to X/{\sim}_{{f}_{\lambda_i}}$.

Now consider the limit $L$ over the Reeb zigzag diagram consisting of the maps $p_i$ and $o_i$, with maps $r_i: L \to X_i$ and $s_i: L \to R_i$.
Since the maps from $X$ in the above Reeb cone factor through $L$, we obtain the commutative diagram
\[
\begin{tikzcd}[column sep={15mm,between origins},row sep=5.5mm]
&& \R &  & \R & \\
\cdots&& R_i\ar[u, "\tilde f_{\lambda_i}"]& & R_{i+1}\ar[u, "\tilde f_{\lambda_{i+1}}"] && \cdots\\
& X_{i-1}\ar[ul, dotted]\ar[ur, two heads, "o_i"]& & X_i\ar[ul, two heads, swap, "p_i"]\ar[ur, two heads, "o_{i+1}"] && X_{i+1}\ar[ul,two heads, swap, "p_{i+1}"] \ar[ur, dotted]\\
&& & 
L\ar[ull, two heads, "r_{i-1}",crossing over]\ar[u,two heads, swap,"r_{i}"]\ar[urr, two heads, swap, "r_{i+1}"]
\ar[uul, two heads, end anchor=south ,"s_{i}"]\ar[uur, two heads, end anchor=south, swap, "s_{i+1}"]
 & & \\
&& & X\ar[uull,two heads, bend left,"q_{\rho_{i-1}}"]\ar[uu,two heads, bend left=45, pos=0.45, crossing over, "q_{\rho_{i}}"]\ar[uurr,two heads, bend right, "q_{\rho_{i+1}}", swap] \ar[u, "m"]& & 
\end{tikzcd}
\]
We have 
$f_{\lambda_i}
=f^L_{\lambda_i}\circ m$
for $1\leq i \leq n$, with 
$f_{\lambda_i}^L=\tilde f_{\lambda_i}\circ s_i .$
Hence, for every $\ell\in L$,
\[s^L(\ell)= \max_j f_{\lambda_j}^L(\ell)-\min_k f_{\lambda_k}^L(\ell)
\le \sum_{i=1}^{n - 1}|f^L_{\lambda_{i+1}}(\ell)-f^L_{\lambda_{i}}(\ell)|.\]
By the surjectivity of $q_{\rho_{i}}$, for every $i$ there is $x_{\ell,i}\in X$ such that $q_{\rho_{i}}(x_{\ell,i})=r_i(\ell)$. Thus,
\begin{align*}
|f_{\lambda_{i+1}}^L(\ell)-f_{\lambda_i}^L(\ell)|
&=|f_{\lambda_{i+1}}(x_{\ell,i})-f_{\lambda_i}(x_{\ell,i})|
\leq(\lambda_{i+1}-\lambda_i)\cdot \|f-g\|_\infty.
\end{align*}
In conclusion,  for every $\ell\in L$,
\[s^L(\ell)\le \sum_{i=1}^{n-1}(\lambda_{i+1}-\lambda_{i})\cdot \|f-g\|_\infty
=
\|f-g\|_\infty. \qedhere\]
\end{proof}

\begin{corollary}\label{univGraph}
$\deltaEGraph$ is a universal distance.
\end{corollary}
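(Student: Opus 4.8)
The plan is to derive universality of $\deltaEGraph$ as a purely formal consequence of the results already in place, without constructing any new zigzag diagrams. Recall from \cref{defSIU} that a distance $d_U$ is \emph{universal} precisely when it is itself stable and satisfies $d_S \le d_U$ for every stable distance $d_S$ on $\PLReebGrph$. The first condition is exactly the content of \cref{stabilityGraph}, so the only thing left to verify is the upper-bound property.

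For the upper bound I would chain together two facts. First, \cref{univPL} states that $\deltaEPL$ is universal, which by definition means $d_S \le \deltaEPL$ for every stable distance $d_S$. Second, the inequality \eqref{eT<eG} gives $\deltaEPL \le \deltaEGraph$: every admissible diagram in the infimum defining $\deltaEGraph$ (a PL zigzag of finite topological graphs) is in particular an admissible diagram for $\deltaEPL$, so passing to the more restrictive class of diagrams can only increase the infimum. Composing the two inequalities yields
\[
d_S \le \deltaEPL \le \deltaEGraph
\]
for every stable distance $d_S$, which is exactly the upper-bound condition required for universality.

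Putting the two pieces together, $\deltaEGraph$ is a stable distance (\cref{stabilityGraph}) that dominates every stable distance, and is therefore universal. I expect no genuine obstacle at this stage: all of the difficulty has already been absorbed into \cref{stabilityGraph}, whose proof demonstrates that the coarser graph-based zigzags obtained from the straight-line homotopy still approximate $\|f-g\|_\infty$. As an immediate by-product, applying the chain above with the choice $d_S = \deltaEGraph$ gives $\deltaEGraph \le \deltaEPL$, and combined with \eqref{eT<eG} this forces the equalities $\deltaPL = \deltaEPL = \deltaEGraph$ recorded in the preceding corollary.
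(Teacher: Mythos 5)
Your proposal is correct and takes essentially the same approach as the paper: the paper's proof also derives \cref{univGraph} directly from inequality \eqref{eT<eG} together with \cref{stabilityGraph,stabilityPL,univPL}, i.e., the chain $d_S \le \deltaEPL \le \deltaEGraph$ for every stable $d_S$, combined with stability of $\deltaEGraph$. You have merely spelled out the details that the paper's one-line proof leaves implicit.
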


\begin{proof}
The claim is a direct consequence of inequality \eqref{eT<eG} together with \cref{stabilityGraph,stabilityPL,univPL}.
\end{proof}

\bibliography{Edit-biblio}

\end{document}